\newcommand{\p}{\partial}
\newcommand{\Z}{\mathbb Z}
\newcommand{\C}{\mathbb C}
\newcommand{\R}{\mathcal R}
\newcommand{\im}{\operatorname{im}}
\newcommand{\ep}{\varepsilon}
\renewcommand{\phi}{\varphi}
\newcommand{\tr}{\operatorname{tr}}
\newcommand{\cs}{\mathbf{cs}}
\newcommand{\Ad}{\operatorname{Ad}}
\newcommand{\Hom}{\operatorname{Hom}}
\newcommand{\Tor}{\operatorname{Tor}}
\newcommand{\Aut}{\operatorname{Aut}}
\newcommand{\diag}{\operatorname{diag}}
\newcommand{\Int}{\operatorname{Int}}
\newcommand{\PD}{\operatorname{PD}}
\newtheorem{theorem}{Theorem}[section]
\newtheorem{lemma}[theorem]{Lemma}
\newtheorem{proposition}[theorem]{Proposition}
\newtheorem{corollary}[theorem]{Corollary}
\newtheorem*{thm}{Theorem}
\theoremstyle{definition}
\newtheorem*{remark}{Remark}
\newtheorem*{example}{Example}
\def\acknowledgementname{Acknowledgements.}
\begin{document}

\title{On the deleted squares of lens spaces}
\thanks{Both authors were partially supported by NSF Grant 1065905.}
\author{Kyle Evans-Lee}
\address{Department of Mathematics, University of Miami, Coral Gables, FL 33124}
\email{\rm{kyleevanslee@gmail.com}}
\author{Nikolai Saveliev}
\address{Department of Mathematics, University of Miami, Coral Gables, FL 33124}
\email{\rm{saveliev@math.miami.edu}}


\subjclass[2000]{55R80, 55S30, 57M27, 57R19}

\begin{abstract}
The configuration space $F_2 (M)$ of ordered pairs of distinct points in a manifold $M$, also known as the deleted square of $M$, is not a homotopy invariant of $M$: Longoni and Salvatore produced examples of homotopy equivalent lens spaces $M$ and $N$ of dimension three for which $F_2 (M)$ and $F_2 (N)$ are not homotopy equivalent. In this paper, we study the natural question whether two arbitrary $3$-dimensional lens spaces $M$ and $N$ must be homeomorphic in order for $F_2 (M)$ and $F_2 (N)$ to be homotopy equivalent. Among our tools are the Cheeger--Simons differential characters of deleted squares and the Massey products of their universal covers.
\end{abstract}

\maketitle

\section{Introduction}
The configuration space $F_n (M)$ of ordered $n$-tuples of pairwise distinct points in a manifold $M$ is a much studied classic object in topology. Until a few years ago, it was conjectured that homotopy equivalent manifolds $M$ must have homotopy equivalent configuration spaces $F_n (M)$. Much had been done towards proving this conjecture until in 2004 Longoni and Salvatore \cite{SL} found a counterexample using the non-homeomorphic but homotopy equivalent lens spaces $L(7,1)$ and $L(7,2)$. They proved that the configuration spaces $F_2 (L(7,1))$ and $F_2 (L(7,2))$ are not homotopy equivalent by showing that their universal covers have different Massey products: all of the Massey products vanish for the former but not for the latter.

This result prompted a natural question pertaining specifically to lens spaces: does the homotopy type of configuration spaces distinguish all lens spaces up to homeomorphism\,? This question was studied by Miller \cite{miller} for two--point configuration spaces, also known under the name of deleted squares. Miller extended the Massey product calculation of \cite{SL} to arbitrary lens spaces; however, comparing the resulting Massey products turned out to be too difficult and the results proved to be inconclusive.

In this paper, we study the above question using Cheeger--Simons flat differential characters \cite{CS}. With their help, we obtain new algebraic restrictions on possible homotopy equivalences between deleted squares of lens spaces. In particular, these restrictions allow for a much easier comparison of the Massey products, producing multiple examples of pairs of homotopy equivalent lens spaces whose deleted squares are not homotopy equivalent, see Section \ref{S:natural}. Some of these pairs have non-vanishing sets of Massey products on both of their universal covers. It remains to be seen if our techniques are sufficient to answer the question in general.

Here is an outline of the paper. The bulk of it deals with Cheeger--Simons flat differential characters. Given a lens space $L(p,q)$, denote by $X_0$ its configuration space $F_2 (L(p,q))$. We study the Cheeger--Simons character $\cs$ which assigns to each representation $\alpha: \pi_1 (X_0) \to SU(2)$ a homomorphism $\cs(\alpha): H_3 (X_0) \to \mathbb R/\Z$. This homomorphism is obtained by realizing each of the generators of $H_3 (X_0)$ by a continuous map $f: M \to X_0$ of a closed oriented $3$--manifold $M$ and letting $\cs(\alpha)$ of that generator equal the Chern--Simons function of the pull-back representation $f^*\alpha$. 

The realization problem at hand is known to have a solution due to an abstract isomorphism $\Omega_3 (X_0) = H_3 (X_0)$, however, explicit realizations $f: M \to X_0$ have to be constructed by hand. Using the naturality of $\cs$, we reduce this task to a somewhat easier problem of realizing homology classes in $H_3 (L(p,q) \times L(p,q))$ and solve it by finding a set of generators realized by Seifert fibered manifolds. The Chern--Simons theory on such manifolds is sufficiently well developed for us to be able to finish the calculation of $\cs$. This calculation leads to the following theorem, which constitutes the main technical result of the paper.

\begin{thm}
Let $p$ be an odd prime and assume that the deleted squares $X_0$ and $X'_0$ of lens spaces $L(p,q)$ and $L(p,q')$ are homotopy equivalent. Then there exists a homotopy equivalence $f: X'_0 \to X_0$ such that, with respect to the canonical generators of the fundamental groups, the homomorphism $f_*: \pi_1 (X'_0) \to \pi_1 (X_0)$  is given by a scalar matrix $\diag\,(\alpha,\alpha)$, where $\pm\,q' = q\alpha^2\pmod p$.
\end{thm}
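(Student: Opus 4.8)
The plan is to read off the matrix of $f_*$ on $\pi_1$ from the naturality of the flat Cheeger--Simons character $\cs$ computed in the preceding sections. First I would record the standard identification $\pi_1(X_0)\cong\Z/p\oplus\Z/p$: since the diagonal in $L(p,q)\times L(p,q)$ has codimension three, the inclusion $X_0\hookrightarrow L(p,q)\times L(p,q)$ is a $\pi_1$--isomorphism, and the canonical generators $x,y$ are the classes of the two factors. A homotopy equivalence $f$ then determines a matrix $A=f_*\in GL_2(\Z/p)$, and the task is to show $A$ may be taken scalar. Because $p$ is odd, every representation $\alpha\colon\pi_1(X_0)\to\SU(2)$ is conjugate into the maximal torus and is recorded by a pair $(a,b)\in(\Z/p)^2$, with $x\mapsto\diag(\zeta^a,\zeta^{-a})$ and $y\mapsto\diag(\zeta^b,\zeta^{-b})$ for $\zeta=e^{2\pi i/p}$; precomposing with $f_*$ sends $(a,b)$ to the pair obtained by applying $A^{T}$.

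The key input is that a flat $\SU(2)$ character factors through $B\pi_1$, so $\cs$ is a homotopy invariant and is natural, giving $\cs'(\alpha\circ f_*)(c)=\cs(\alpha)(f_*c)$ for every $c\in H_3(X'_0)$. I would then substitute the explicit formulas obtained earlier from the Seifert fibered realizations. Among the generators of $H_3(X_0)$ the character $\cs(\alpha)$ takes two pure--square values, the lens--space Chern--Simons invariants $-q^{*}a^{2}/p$ and $-q^{*}b^{2}/p$ in $\mathbb R/\Z$ (with $qq^{*}\equiv1\bmod p$), together with a bilinear value proportional to $ab/p$ on the remaining torsion generator. Feeding these into the naturality identity and comparing the coefficients of $a^{2}$, $b^{2}$ and $ab$, viewed as functions on $(\Z/p)^2$, produces a finite system of congruences relating the entries of $A$, the matrix $B=f_*|_{H_3}$, and $q,q'$.

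Solving this system is the heart of the argument and the main obstacle. The decisive structural fact is that $B$ is an isomorphism of abelian groups and therefore preserves the splitting of $H_3$ into free and torsion parts; in particular the bilinear torsion class cannot be used to absorb the cross terms generated by the two pure squares under a genuinely off--diagonal $A$. This rigidity forces the off--diagonal entries of $A$ to vanish up to an interchange of the two factors, so that $A$ is a scalar multiple of a permutation matrix; composing $f$ with the point--swap involution of $X_0$ when necessary then yields $A=\diag(\alpha,\alpha)$, the common scalar $\alpha$ arising because both factors are rescaled by the same ratio $q\mapsto q'$. The surviving congruence from the pure--square comparison reads $q'^{*}\alpha^{2}\equiv\pm q^{*}\bmod p$, which after multiplying through by $qq'$ becomes exactly $\pm q'=q\alpha^{2}\bmod p$.

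The delicate points I expect to handle carefully are verifying that the bilinear coefficient is a unit modulo $p$, so that the mixing obstruction is genuine and not merely generic, and tracking the sign ambiguity through the normalization by symmetries, where the $\pm$ is precisely the orientation and factor--swap ambiguity inherent in the homeomorphism classification of lens spaces.
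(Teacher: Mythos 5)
Your strategy is, up to a point, exactly the paper's: naturality of the flat Cheeger--Simons character, the explicit values $-r(k^2+\ell^2)/p$ on the free generator and $\pm\,2k\ell/p$ on the torsion generator of $H_3(X_0)$, comparison of the resulting quadratic expressions in the representation parameters $(k,\ell)$, and composition with the factor-swap homeomorphism $(x,y)\mapsto(y,x)$ to dispose of the anti-diagonal case. Carried out correctly, this yields what the paper's Section \ref{S:heq} yields: $\alpha\gamma\equiv\beta\delta\equiv 0\pmod p$, hence (over the field $\Z/p$) the matrix of $f_*$ is diagonal or anti-diagonal; in the diagonal case $\alpha^2\equiv\delta^2$, so $\delta=\pm\,\alpha$, together with $\ep q'\equiv q\alpha^2\pmod p$.

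The genuine gap is in your final step: the Cheeger--Simons constraints cannot promote $\delta=\pm\,\alpha$ to $\delta=\alpha$. Every identity your system produces is quadratic in $(k,\ell)$, and the torsion value carries an intrinsic sign ambiguity $\pm\,2k\ell/p$ (from the unresolved orientation of the realizing Seifert-fibered manifold), while the torsion entry $b$ of $f_*$ on $H_3$ is only constrained to be a unit. Consequently $\diag(\alpha,-\alpha)$, with $b=-\alpha^2$, satisfies every congruence: replacing $\ell$ by $-\ell$ preserves $k^2+\ell^2$ and only flips the sign of $k\ell$, which the indeterminacies absorb. Your phrase ``the common scalar arising because both factors are rescaled by the same ratio'' is circular, since $\delta=-\alpha$ rescales by the same ratio --- only $\delta^2$ is pinned down. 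The paper closes this gap with an input of a different, non-quadratic nature (Lemma \ref{L:diag} and Proposition \ref{P:plus}): the deck-transformation action of $\pi_1(X_0)$ on $H^2(\tilde X_0)$ satisfies $\tau^*_{m,n}(a_k)=a_{k+m-n}$, so its kernel is exactly the diagonal subgroup of $\Z/p\,\oplus\,\Z/p$; any lift $\tilde f$ intertwines the two actions, so $f_*$ must carry the diagonal subgroup to the diagonal subgroup, and $\diag(\alpha,\pm\,\alpha)$ does this only with the plus sign, because $p$ is odd and $\alpha$ is a unit. Without this (or some equivalent argument involving the universal covers), your proof establishes only $f_*=\diag(\alpha,\pm\,\alpha)$, which is strictly weaker than the theorem's claim.
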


The homotopy equivalence $f: X'_0 \to X_0$ of this theorem lifts to a homotopy equivalence $\tilde f: \tilde X'_0 \to \tilde X_0$ of the universal covering spaces of the type studied by Longoni and Salvatore \cite{SL} and Miller \cite{miller}. The homotopy equivalence $\tilde f$ naturally possesses equivariance properties made explicit by the knowledge of the induced map $f_*$ on the fundamental groups. In the rest of the paper, we use these properties to reduce the comparison problem for the Massey products on $\tilde X'_0$ and $\tilde X_0$ to an algebraic problem in certain cyclotomic rings arising as cohomology of $\tilde X_0$ and $\tilde X'_0$. This is still a difficult problem, which is solved in individual examples with the help of a computer.

\medskip\noindent
\textbf{Acknowledgments:} We are thankful to S{\l}awomir Kwasik who brought the problem discussed in this paper to our attention and who generously shared his expertise with us. We also thank Ken Baker for his invaluable help throughout the project, Ian Hambleton, Matthew Miller, Daniel Ruberman, Paolo Salvatore, and Dev Sinha for useful discussions, and Joe Masterjohn for assisting us with the computer code.


\section{Homology calculations}
This section covers some basic homological calculations for lens spaces, their squares, and their deleted squares which are used later in the paper.


\subsection{Lens spaces}\label{S:lenses} 
Let $p$ and $q$ be relatively prime positive integers such that $p > q$. Define the lens space $L(p,q)$ as the orbit space of the unit sphere $S^3 = \{\,(z,w)\;|\;|z|^2 + |w|^2\,\} \subset \C^2$ by the action of the cyclic group $\Z/p$ generated by the rotation $\rho(z,w) = (\zeta z,\zeta^q w)$, where $\zeta = e^{2\pi i/p}$. This choice of $\rho$ gives a canonical generator $1 \in \pi_1 (L(p,q)) = \Z/p$. The standard CW-complex structure on $L(p,q)$ consists of one cell $e_k$ of dimension $k$ for each $k = 0,1,2,3$. The resulting cellular chain complex 
\smallskip
\[
\begin{CD}
0 @>>> \Z @> 0>> \Z @> p>> \Z @> 0>> \Z @>>> 0
\end{CD}
\]

\medskip\noindent
has homology $H_0(L(p,q))= \Z$, $H_1(L(p,q))= \Z/p$, $H_2(L(p,q))= 0$ and $H_3(L(p,q))= \Z$.


\subsection{Squares of lens spaces}\label{S:squares}
Given a lens space $L = L(p,q)$, consider its square $X = L\times L$ and give it the product CW-complex structure with the cells $e_i \times e_j$. The homology of $X$ can easily be calculated using this CW-complex structure\,:
\begin{alignat*}{1}
& H_0 (X) = \Z,\quad H_1 (X) = \Z/p\,\oplus\,\Z/p,\quad H_2 (X) = \Z/p,  \\
& H_3 (X) = \Z\,\oplus\,\Z\,\oplus\,\Z/p,\quad H_4 (X) = \Z/p\,\oplus\,\Z/p,  \\
& H_5 (X) = 0,\quad H_6 (X) = \Z.
\end{alignat*}
This calculation also provides us with explicit generators in each of the above homology groups. For instance, the two infinite cyclic summands in $H_3 (X)$ are generated by $e_0\times e_3$ and $e_3 \times e_0$ and are realized geometrically by the two factors of $L$ in $L \times L$. The summand $\Z/p$ is generated by the homology class of the cycle $e_2\times e_1 + e_1\times e_2$, see Hatcher \cite[page 272]{hatcher}. It is a cycle because $\p (e_2 \times e_1 + e_1 \times e_2)\, =\, p\,\cdot\, e_1\times e_1 - e_1 \times\, p\,\cdot\, e_1 = 0$, and its $p$-th multiple is a boundary $\p (e_2\, \times\, e_2)\; =\; p\,\cdot\, e_1\,\times\, e_2 + e_2\,\times\, p\,\cdot\, e_1 = p\;(e_2 \times e_1 + e_1 \times e_2)$. A non-singular geometric realization of this homology class will be constructed in Section \ref{S:sing}.


\subsection{Deleted squares}
Let $\Delta \subset X = L \times L$ be the diagonal and call $X_0 = X - \Delta$ the \emph{deleted square} of $L$. It is an open manifold, which contains as a deformation retract the compact manifold $X\,-\,\Int N(\Delta)$, where $N(\Delta)$ is a tubular neighborhood of $\Delta \subset X$. The normal bundle of $\Delta$ is trivial because it is isomorphic to the tangent bundle of $L$ and the manifold $L$ is parallelizable. Therefore, the boundary of $X - \Int N(\Delta)$ is homeomorphic to $L \times S^2$.

We wish to compute (co)homology of deleted squares. To this end, consider the homology long exact sequence of $(X,\Delta)$. The maps $i_*: H_k (\Delta) \to H_k (X)$ induced by the inclusion of the diagonal are necessarily injective, hence the long exact sequence splits into a family of short exact sequences,
\[
\begin{CD}
0 @>>> H_k (\Delta) @>>>  H_k (X) @>>>  H_k (X,\Delta) @>>> 0. \\
\end{CD}
\]
Calculating $H_k (X,\Delta)$ from these exact sequences is immediate except for $k = 3$. By composing the inclusion $\Delta \subset L \times L$ with the projection on the two factors, we see that the map $i_*: H_3 (\Delta) \to H_3 (X)$ is of the form $i_*(1) = (1,1,a)$ with respect to the generators of $H_3 (X) = \Z\,\oplus\,\Z\,\oplus\,\Z/p$ described in Section \ref{S:squares}. Therefore, $H_3 (X,\Delta) = \Z\,\oplus\,\Z/p$. Using the Poincar\'e duality isomorphism $H^{6-k}(X - \Delta) = H_k (X,\Delta)$ we then conclude that
\begin{alignat*}{1}
& H^0 (X_0) = \Z,\quad H^1 (X_0) = 0,\quad H^2 (X_0) = \Z/p\,\oplus\,\Z/p  \\
& H^3 (X_0) = \Z\,\oplus\,\Z/p,\quad H^4 (X_0) = \Z/p,\quad H^5 (X_0) = \Z/p.
\end{alignat*}

To compute homology of $X_0$, one can repeat the above argument starting with the cohomology long exact sequence of $(X,\Delta)$, or simply use the universal coefficient theorem. The answer is as follows\,:
\begin{alignat*}{1}
& H_0 (X_0) = \Z,\quad H_1 (X_0) = \Z/p\,\oplus\,\Z/p,\quad H_2 (X_0) = \Z/p, \\
& H_3 (X_0) = \Z\,\oplus\,\Z/p,\quad H_4 (X_0) = \Z/p,\quad H_5 (X_0) = 0.
\end{alignat*}

\begin{lemma}\label{L:torsion}
The homomorphism $H_3 (X_0) \to H_3 (X)$ induced by the inclusion $i: X_0 \to X$ is an isomorphism $\Z/p \to \Z/p$ on the torsion subgroups.
\end{lemma}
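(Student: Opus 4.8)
The plan is to run the long exact sequence of the pair $(X,X_0)$ and pin down its terms near degree three. First I would identify the relative groups $H_k(X,X_0)$. Since $X_0$ deformation retracts onto $X-\Int N(\Delta)$, excision gives $H_k(X,X_0)\cong H_k(N(\Delta),N(\Delta)-\Delta)$, and the triviality of the normal bundle of $\Delta$ (noted above, via parallelizability of $L$) identifies this pair with $(L\times D^3,L\times S^2)$. The Thom isomorphism, or a direct relative K\"unneth computation, then yields $H_k(X,X_0)\cong H_{k-3}(L)$; in particular $H_3(X,X_0)\cong\Z$ and $H_4(X,X_0)\cong\Z/p$. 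This is the only input beyond the homology groups already tabulated.

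Next I would write the relevant portion of the long exact sequence of the pair, which with the groups above reads
\[
\Z/p\ \xrightarrow{\ \p\ }\ \Z\oplus\Z/p\ \xrightarrow{\ i_*\ }\ \Z\oplus\Z\oplus\Z/p\ \xrightarrow{\ k\ }\ \Z,
\]
where $i_*$ is the inclusion-induced map of the lemma and $k$ is the projection to $H_3(X,X_0)$. The key elementary observation is that $k$ lands in the torsion-free group $\Z$, so it annihilates the torsion subgroup of $H_3(X)$; by exactness this torsion subgroup $\Z/p$ therefore lies in $\ker k=\im i_*$. Thus the full torsion of $H_3(X)$ is already captured by the image of $i_*$.

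The heart of the argument is to show that $\p=0$, so that $i_*$ is injective. Since $\im\p$ is a quotient of the finite group $\Z/p$, it is a finite subgroup of $H_3(X_0)=\Z\oplus\Z/p$ and hence lies in the torsion summand $\Z/p$; write $\im\p\cong\Z/d$ with $d\mid p$. By exactness $\ker i_*=\im\p$, so $\im i_*\cong H_3(X_0)/\im\p\cong\Z\oplus(\Z/p)/(\Z/d)$, whose torsion is $\Z/(p/d)$. On the other hand $\im i_*=\ker k$ contains the full torsion $\Z/p$ of $H_3(X)$, while, being a subgroup of $H_3(X)$, its torsion cannot exceed $\Z/p$; hence the torsion of $\im i_*$ is exactly $\Z/p$. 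Comparing orders forces $p/d=p$, that is $d=1$, so $\p=0$ and $i_*$ is injective. Restricting the injective homomorphism $i_*$ to torsion subgroups then gives an injection $\Z/p\to\Z/p$ of finite groups of equal order, which is automatically an isomorphism, proving the lemma.

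I expect the bookkeeping in the last paragraph---keeping the free and torsion parts separate and correctly matching $\im i_*=\ker k$ with the full torsion of $H_3(X)$---to be the one genuinely delicate point. Everything else is the formal machinery of the pair sequence together with the Thom isomorphism, and the primality of $p$ plays no role here.
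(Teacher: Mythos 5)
Your proof is correct, and while it shares the paper's overall architecture --- excision plus the Thom isomorphism to compute $H_3(X,X_0)\cong\Z$ and $H_4(X,X_0)\cong\Z/p$, then the long exact sequence of the pair $(X,X_0)$, reducing everything to the vanishing of the connecting homomorphism $\delta\colon H_4(X,X_0)\to H_3(X_0)$ (your $\partial$) --- the mechanism you use to kill $\delta$ is genuinely different. The paper argues on the $H_4$ side of $\delta$: by exactness, $\ker\delta=\im\bigl(H_4(X)\to H_4(X,X_0)\bigr)$, and the first isomorphism theorem identifies that image with $(\Z/p\oplus\Z/p)/\im\bigl(H_4(X_0)\to H_4(X)\bigr)$, whose order is at least $p$ because $H_4(X_0)=\Z/p$; hence $\ker\delta$ is all of $H_4(X,X_0)$ and $\delta=0$. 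You argue on the $H_3$ side: torsion-freeness of $H_3(X,X_0)\cong\Z$ forces the entire torsion subgroup $\Z/p$ of $H_3(X)$ into $\ker k=\im i_*$, while the first isomorphism theorem applied to $i_*$ gives $\im i_*\cong\Z\oplus\Z/(p/d)$ when $\im\delta\cong\Z/d$; comparing torsion orders yields $d=1$. Consequently the paper's count uses the maps $H_4(X_0)\to H_4(X)\to H_4(X,X_0)$ but never needs $H_3(X,X_0)$, whereas yours uses $H_3(X)\to H_3(X,X_0)$ but never touches $H_4(X_0)$ or $H_4(X)$. The two routes are of comparable length, both are elementary order counts, and --- as you correctly observe --- neither requires $p$ to be prime, which matters since the lemma is invoked for arbitrary $p$. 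Your bookkeeping at the delicate point (that the torsion of the subgroup $\im i_*\subset H_3(X)$ is its intersection with $\Tor H_3(X)$, hence exactly $\Z/p$) is handled correctly.
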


\begin{proof}
Let $i:X_0 \to X$ be the inclusion map. We will show that $i_*: H_3(X_0)\to H_3(X)$ is injective, which will imply the result because $H_3(X_0)=\Z\,\oplus\,\Z/p$, $H_3(X)=\Z\,\oplus\,\Z\,\oplus\,\Z/p$, and all homomorphisms $\Z/p\to \Z$ are necessarily zero.

Apply the excision theorem to the pair $(X,X_0)$ with $U=X - N(\Delta)$ to obtain $H_*(X,X_0)= H_* (X-U,X_0 - U)= H_* (N(\Delta),\partial N(\Delta))$. Using the Thom isomorphism $H_* (N(\Delta),\partial N(\Delta))= H_{*-3} (\Delta)$, we conclude that $H_2 (X,X_0) = 0$, $H_3 (X,X_0) = \Z$ and $H_4 (X,X_0)=\Z/p$. In particular, the long exact sequence 
\medskip
\[
\begin{CD}
H_4(X_0) @> i_*>> H_4(X) @> j >> H_4(X,X_0) @>\delta>> H_3(X_0)@>i_*>> H_3(X) \\
\end{CD}
\]

\medskip\noindent
of the pair $(X,X_0)$ looks as follows
\medskip
\[
\begin{CD}
\Z/p @ >i_*>>\Z/p \oplus \Z/p @> j >> \Z/p @>\delta>>\Z\oplus\Z/p @> i_*>> \Z \oplus\Z\oplus\Z/p 
\end{CD}
\]

\medskip\noindent
We wish to show that $\delta=0$. Since there are no non-trivial homomorphisms $\Z/p \to \Z$, the map $\delta$ must send a generator of $ \Z/p$ to $(0,k)\in \Z\,\oplus\,\Z/p$ for some $k\pmod p$. Then $\ker \delta = \im j = \Z/m$, where $m=\gcd(p,k)$. By the first isomorphism theorem applied to $j: \Z/p \oplus \Z/p\to\Z/p$ with $\ker j = \im i_*$ we have 
\[
(\Z/p\,\oplus\,\Z/p)/\im i_*\, =\, \Z/m,
\]
which is only possible if $m = p$. But then $\gcd(p,k) = p$ which implies that $k = 0\pmod p$ and hence $\delta = 0$.
\end{proof}

Recall that we have a canonical isomorphism $H_3 (X) = \Z\,\oplus\,\Z\,\oplus\,\Z/p$ given by the choice of generators $e_0 \times e_3$, $e_3 \times e_0$ and $e_2 \times e_1 + e_1 \times e_2$.
  
\begin{lemma}\label{L:gens}
One can choose generators in $H_3 (X_0) = \Z\,\oplus\,\Z/p$ so that the homomorphism $i_*: H_3 (X_0) \to H_3 (X)$ sends $(1,0) \in H_3 (X_0)$ to $(1,1,0) \in H_3 (X)$, and  $(0,1) \in H_3 (X_0)$ to $(0,0,1) \in H_3 (X)$.
\end{lemma}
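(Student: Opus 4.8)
The plan is to read off the image of $i_*$ from the homology long exact sequence of the pair $(X,X_0)$ already exploited in the proof of Lemma \ref{L:torsion}. In the relevant range this sequence is
\[
\begin{CD}
H_4(X,X_0) @>\delta>> H_3(X_0) @>i_*>> H_3(X) @>j>> H_3(X,X_0),
\end{CD}
\]
and there we showed $\delta = 0$, so $i_*$ is injective with $\im i_* = \ker j$. Since $H_3(X,X_0) = \Z$, identifying $\ker j$ determines $\im i_*$ exactly; the desired generators of $H_3(X_0)$ will then be the $i_*$-preimages of a convenient generating set of that image.

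First I would pin down the map $j$. Excision and the Thom isomorphism, exactly as in Lemma \ref{L:torsion}, give $H_3(X,X_0)\cong H_3(N(\Delta),\p N(\Delta))\cong H_0(\Delta)=\Z$, and under this identification $j$ sends a $3$-cycle of $X$ to its intersection number with the diagonal $\Delta$. On the torsion generator $e_2\times e_1+e_1\times e_2$ the map $j$ vanishes because $H_3(X,X_0)=\Z$ is torsion free. On the two free generators $e_0\times e_3$ and $e_3\times e_0$, realized by $\{*\}\times L$ and $L\times\{*\}$, the cycle meets $\Delta$ transversally in the single point $(*,*)$, so $j$ takes the value $\pm 1$ on each.

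The crux is to fix the two signs relative to one another, and for this I would feed the diagonal class back into $j$. As observed in the computation of $H_3(X,\Delta)$, the inclusion of $\Delta$ sends its fundamental class to $(1,1,a)\in H_3(X)$, while the self-intersection $\Delta\cdot\Delta$ is the Euler number of the normal bundle of $\Delta$; this bundle is the tangent bundle of $L$, so $\Delta\cdot\Delta=\chi(L)=0$. Evaluating $j$ on $(1,1,a)$ and using that $j$ annihilates torsion yields $j(e_0\times e_3)+j(e_3\times e_0)=\Delta\cdot\Delta=0$, so the two values are opposite. (Alternatively one may invoke the skew-symmetry of the intersection form on $H_3$ of the $6$-manifold $X$.) Hence, up to an overall sign, $j(m,n,t)=m-n$ and $\ker j=\{(m,m,t)\}=\langle(1,1,0),(0,0,1)\rangle\cong\Z\oplus\Z/p$.

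Finally, since $i_*$ is injective and maps $H_3(X_0)\cong\Z\oplus\Z/p$ isomorphically onto $\ker j$, I would declare the generators $(1,0)$ and $(0,1)$ of $H_3(X_0)$ to be $i_*^{-1}(1,1,0)$ and $i_*^{-1}(0,0,1)$; by construction they have the stated images, and the torsion generator is consistent with Lemma \ref{L:torsion}. I expect the third step to be the only genuine obstacle: the sign computation is exactly what rules out the alternative $\ker j=\langle(1,-1,0),(0,0,1)\rangle$, which does not contain $(1,1,0)$ and would make the statement false as written.
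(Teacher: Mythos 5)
Your proposal is correct, and it takes a genuinely different route from the paper's proof. Both arguments start identically, quoting $\delta = 0$ from the proof of Lemma \ref{L:torsion} so that $i_*$ is injective with $\im i_* = \ker j$. But the paper then passes to quotients by torsion and produces an explicit generator of $H_3(X_0)/\Tor$: the projection of $X = L \times L$ onto its first factor restricts to a fiber bundle $\pi\colon X_0 \to L$, parallelizability of $L$ gives a section $s\colon L \to X_0$ (the diagonal pushed off itself), and since $\pi \circ s = \mathrm{id}$, the class $s_*[L]$ generates $H_3(X_0)/\Tor$ and satisfies $i_*\,s_*[L] = [\Delta] = (1,1,a)$; the torsion isomorphism of Lemma \ref{L:torsion} then lets one adjust generators so that the images become exactly $(1,1,0)$ and $(0,0,1)$. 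You never construct a cycle in $X_0$: instead you compute the relative map $j$ itself, identifying it via excision and the Thom isomorphism with the intersection number against $\Delta$, noting that torsion must die in $H_3(X,X_0) = \Z$, and fixing the relative sign of the two values $\pm 1$ on the free generators by $\Delta \cdot \Delta = 0$, so that $\im i_* = \ker j = \langle (1,1,0),\,(0,0,1)\rangle$ on the nose. The two proofs ultimately rest on the same geometric fact --- the normal bundle of $\Delta$ is trivial, equivalently $\chi(L) = 0$ --- but use it differently: the paper converts the nonvanishing normal field into an explicit push-off generator, in keeping with its theme of realizing homology classes by explicit geometric cycles for the later Cheeger--Simons computations, whereas you only need the numerical consequence that the self-intersection vanishes (which, as you note, also follows from skew-symmetry of the intersection form on $H_3$ of a $6$-manifold). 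What your route buys is that the image subgroup, the sign issue, and the torsion bookkeeping are all settled in one stroke, at the cost of invoking the standard identification of $j$ with intersection against $\Delta$; what the paper's route buys is an explicit manifold representative of the free generator of $H_3(X_0)$.
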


\begin {proof}
It follows from the proof of Lemma \ref{L:torsion} that the homomorphism $\delta: H_4 (X,X_0) \to H_3 (X_0)$ in the homology long exact sequence of the pair $(X,X_0)$ is zero. Therefore, that sequence takes the form
\medskip
\[
\begin{CD}
0\to H_3(X_0)@>i_*>> H_3(X) \to H_3(X,X_0) \to H_2(X_0) @> i_* >> H_2(X) \to 0. \\
\end{CD}
\]

\medskip\noindent
Since both $H_2 (X_0)$ and $H_2 (X)$ are isomorphic to $\Z/p$, the homomorphism $i_*: H_2 (X_0) \to H_2 (X)$ must be an isomorphism. According to Lemma \ref{L:torsion}, the homomorphism $i_*: H_3 (X_0) \to H_3 (X)$ is an isomorphism on the torsion subgroups. Factoring out the torsion, we obtain the short exact sequence 
\medskip
\[
\begin{CD}
0@>>> H_3(X_0)/\Tor@>i_*>> H_3(X)/\Tor @>>> \Z @>>> 0 \\
\end{CD}
\]


\medskip\noindent
To describe the image of $i_*$ in this sequence, consider the projection of $X = L\,\times\,L$ onto its first factor. The restriction of this projection to $X_0 \subset X$ is a fiber bundle $\pi: X_0 \to L$ with fiber a punctured lens space. Since $L$ is parallelizable, $\pi$ admits a section $s: L \to X_0$ obtained by pushing the diagonal $\Delta \subset X$ off in the direction of the fiber. The group $H_3(X_0)/\Tor$ is then generated by $s_* ([L])$ which is sent by $i_*$ to $[\Delta] = (1,1)$. The statement now follows.
\end {proof}


\section{Geometric realization}\label{S:sing}
In this section, we will realize the homology class $[e_2\times e_1 + e_1\times e_2] \in H_3 (X)$ geometrically by constructing a closed oriented 3-manifold $M$ and a continuous map $f: M \to X$ such that 
\[
f_*\,[M]\;=\;[e_2\times e_1 + e_1\times e_2] \in H_3 (X),
\]
where $[M]$ is the fundamental class of $M$. Finding a pair $(M,f)$ like that is a special case of the Steenrod realization problem. In the situation at hand, this problems is known to have a solution  \cite{rudyak}, which is unfortunately not constructive. Exhibiting an explicit $(M,f)$ will therefore be our task.


\subsection{Singular representative}\label{S:sing-rep}
Let us consider the CW-subcomplex $Y \subset X$ obtained from the 3-skeleton of $X = L \times L$ by removing the cells $e_0\times e_3$ and $e_3\times e_0$. Note that $Y$ contains two cells, $e_1\times e_2$ and $e_2\times e_1$, in dimension three, and that all other cells of $Y$ have lower dimensions. One can easily see that the cycle $e_1\times e_2 + e_2 \times e_1$ generates $H_3 (Y) = \Z$ and that the map $H_3 (Y) \to H_3 (X)$ induced by the inclusion takes this generator to $[e_1\times e_2 + e_2 \times e_1] \in H_3 (X)$. 

The CW-complex $Y$ is easy to describe: it is obtained by attaching solid tori $D^2 \times S^1$ and $S^1 \times D^2$ to the core torus $S^1\times S^1$ via the maps 
\begin{gather*}
\p D^2 \times S^1 \to S^1 \times S^1,\quad (z,w) \to (z^p, w),\\
S^1 \times \p D^2 \to S^1 \times S^1,\quad (z,w) \to (z, w^p),
\end{gather*}
where we identified $D^2$ with the unit disk in complex plane. Note that $Y$ is not a manifold unless $p = 1$ (in which case it is the 3-sphere with its standard Heegaard splitting). Our next step will be to find, for every $p \ge 2$, a closed oriented 3-manifold $M$ and a continuous map $g: M \to Y$ which induces an isomorphism $g_*: H_3 (M) \to H_3 (Y)$. The desired map $f: M \to X$ will then be the composition of $g$ with the inclusion $i: Y \to X$.


\subsection{Resolution of singularities}
The manifold $M$ will be obtained by resolving the singularities of $Y = (D^2 \times S^1) \cup_{\,S^1 \times S^1} (S^1 \times D^2)$. We start by removing tubular neighborhoods of $\{0\} \times S^1\subset D^2\times S^1$ and  $ S^1 \times \{0\} \subset S^1\times D^2$ from the two solid tori to obtain
\[
Y_0=(A \times S^1)\; \cup_{\,S^1\times S^1}(S^1\times A),
\]
where $A$ is an annulus depicted in Figure \ref{fig1}. We will resolve the singularities of $Y_0$ and then fill in the two solid tori to obtain $M$.

\begin{figure}[!ht]
 \begin{minipage}[b]{0.45\linewidth}
  \hspace{-10mm}
  \includegraphics[width=1.45\textwidth]{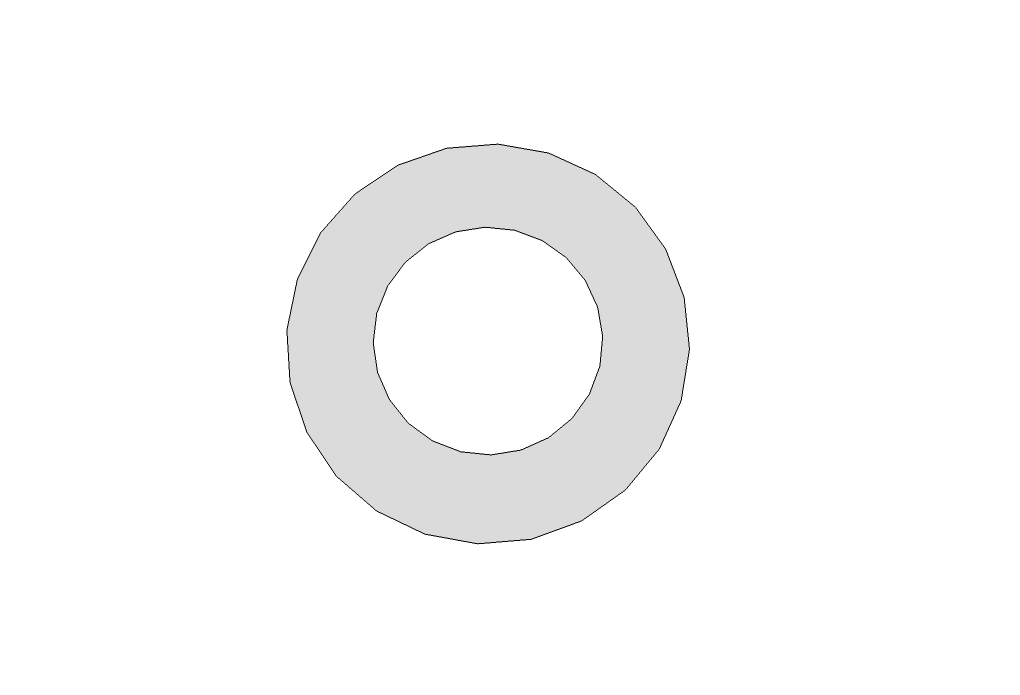}
  \caption{Annulus $A$}\label{fig1}
 \end{minipage}
 \begin{minipage}[b]{0.54\linewidth}
  \includegraphics[width=1.2\textwidth]{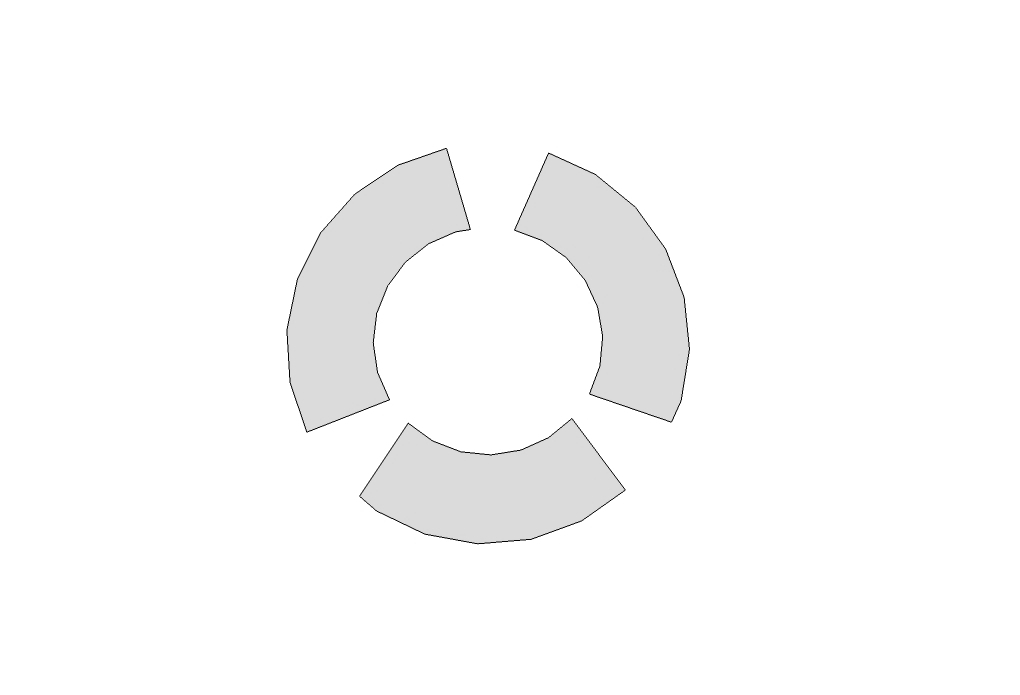} 
  \caption{Cuts in $A$ ($p=3$)}\label{fig2}
 \end{minipage}
\end{figure}

Inside of $Y_0$, one of the two boundary circles of $A$, say the inner one, is glued to an $S^1$ factor in the torus $S^1 \times S^1$ by the map $z \to z^p$ of degree $p$, forming a surface which is singular (unless $p = 2$, when the surface is the M{\"o}bius band). The boundary of this singular surface is a circle. Make $p$ cuts in $A$ as shown in Figure \ref{fig2}. For each of the resulting rectangles $A_i$, $i = 0,\ldots, p-1$, identify the endpoints of its inner side with each other to obtain a circle $C_i$. The resulting spaces $\bar{A_i}$ are shown in Figure \ref{fig3}.

\begin{figure}[!ht]
 \begin{minipage}[b]{0.48\linewidth}
  \hspace{-2mm}
  \includegraphics[width=1.1\textwidth]{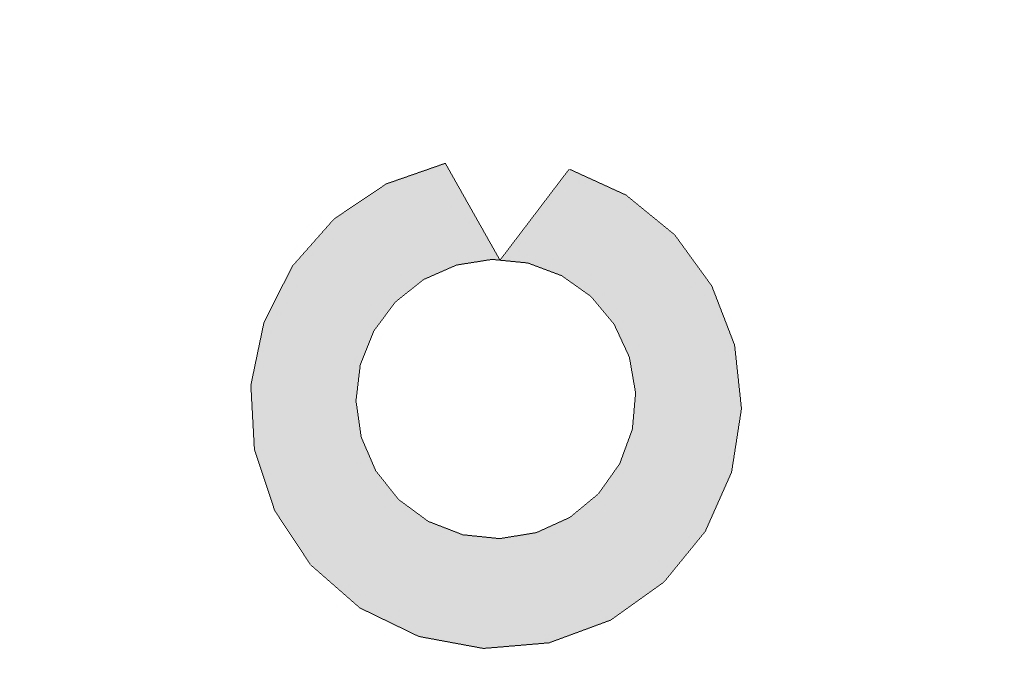}
  \caption{$\bar A_i$}\label{fig3}
 \end{minipage}
 \begin{minipage}[b]{0.48\linewidth}
  \includegraphics[width=\textwidth]{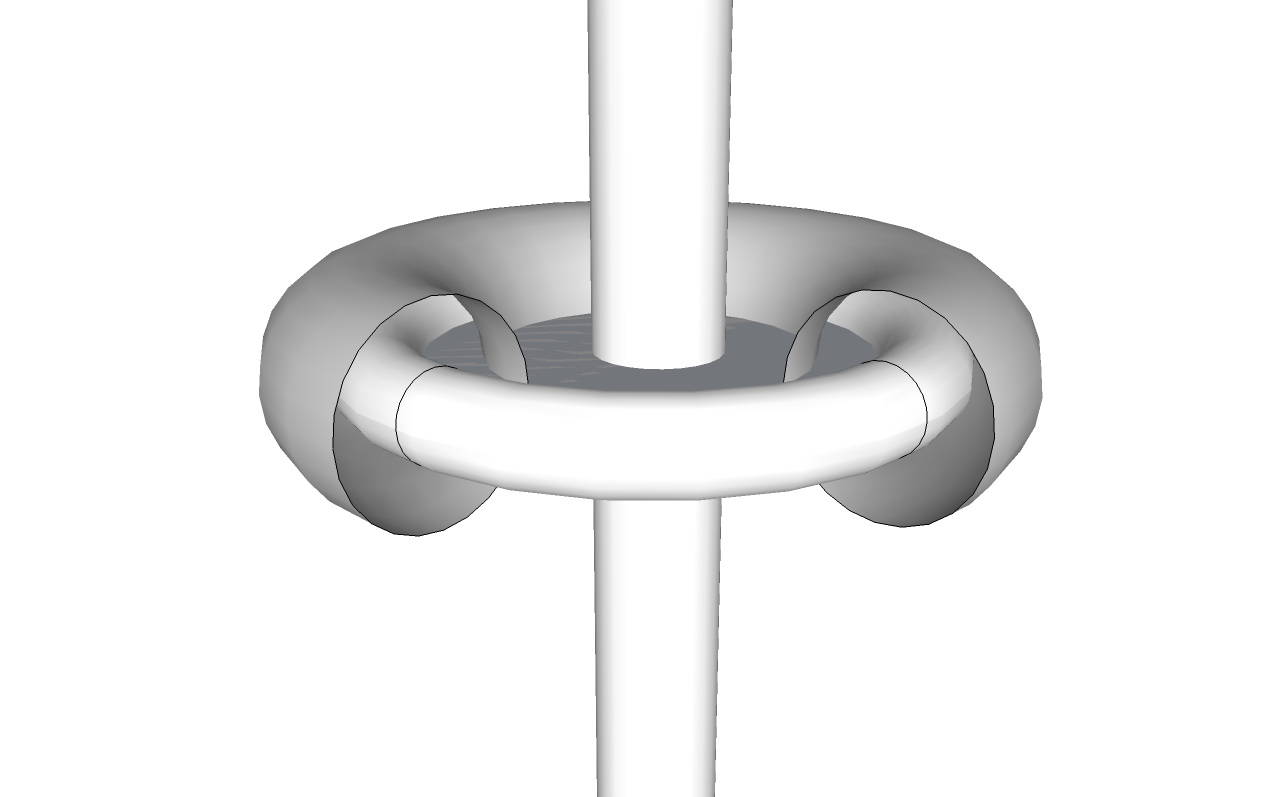} 
  \caption{$W_i$}\label{fig4}
 \end{minipage}
\end{figure}

Next, for each $\bar{A_i} \times S^1$, take a copy of $S^1 \times \bar{A_i}$ coming from the other side, and glue the two together into
\[
W_i = (S^1 \times \bar A_i)\; \cup_{\,S^1\times S^1} (\bar A_i \times S^1)
\]
by identifying the two tori, $C_i\,\times\, S^1$ and $S^1\, \times\, C_i$, via matching the factor $C_i$ of the former with the factor $S^1$ of the latter, and vice versa. The resulting space $W_i$ is a copy of $S^3$ equipped with the standard genus-one Heegaard splitting, from which the cores of the two solid tori have been removed. Additionally, there are two slits cut along the annuli that run along the solid tori as shown in Figure \ref{fig4}. Note that the only singularity in $W_i$ is the point where the two slits meet. Shown in Figure \ref{fig5} is the complement of $W_i$ in the $3$-sphere, the two slits depicted as 2-spheres.

\begin{figure}[!ht]
 \begin{minipage}[b]{0.48\linewidth}
  \hspace{-17mm}
  \includegraphics[width=1.5\textwidth]{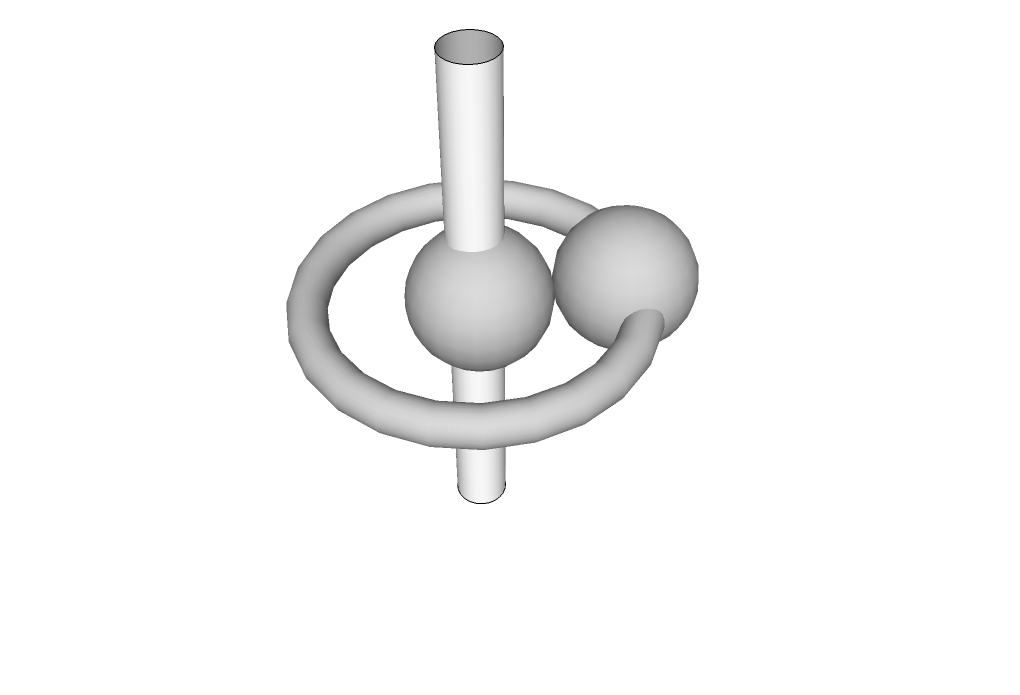}
  \vspace{-15mm}
  \caption{The complement of $W_i$ in $S^3$}\label{fig5}
 \end{minipage}
 \begin{minipage}[b]{0.48\linewidth}
  \hspace{-15mm}
  \includegraphics[width=1.7\textwidth]{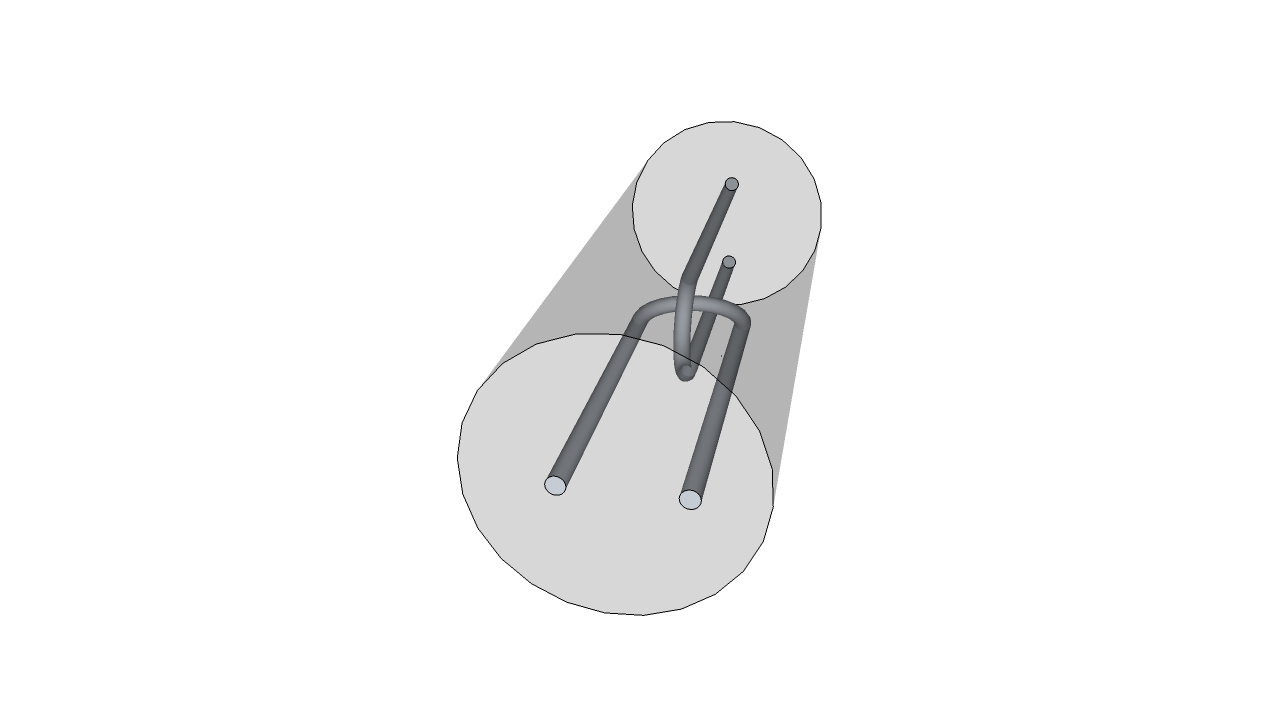} 
  \caption{$[0,1]\times D^2$ with tangle removed}\label{fig7}
 \end{minipage}
\end{figure}

Blow up the singular point of $W_i$ into a 3-ball to obtain a non-singular space $\widetilde W_i$ which maps back to $W_i$ by collapsing the 3-ball into a point. The space $\widetilde W_i$ can be viewed as a subset of $S^3$ whose complement is obtained from the complement of $W_i$ shown in Figure \ref{fig5} by pulling apart the two 2-spheres touching each other in a single point. Turning $\widetilde{W_i}$ inside out with respect to one of the two 2-spheres makes it into
\[
[0,1] \times S^2 = ( [0,1]\times D^2 )\;\cup ([0,1] \times D^2)
\]
with a tubular neighborhood of the clasp tangle removed from one copy of $[0,1] \times D^2$, see Figure \ref{fig7}.

\begin{figure}[!ht]
\centering
\includegraphics[width=0.8\textwidth]{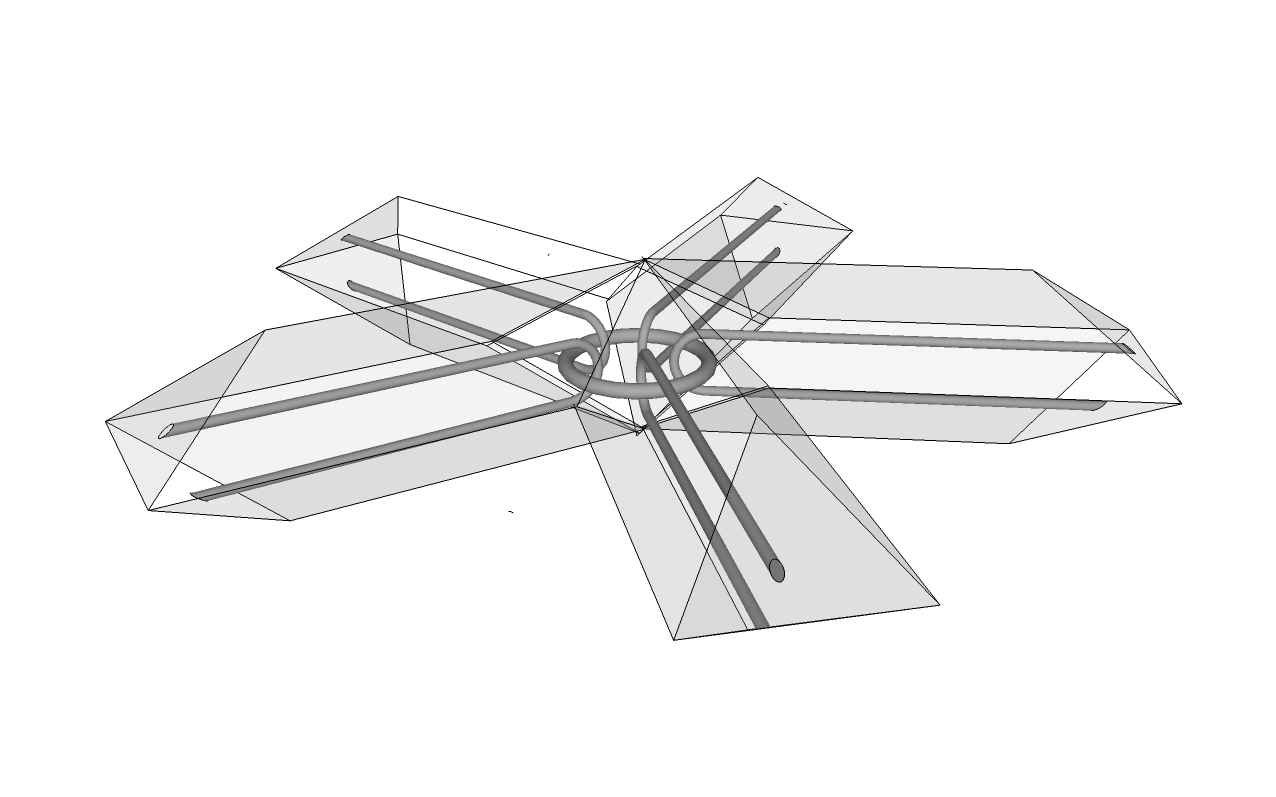}
\caption{Identified Tangle Clasps ($p = 5$)}\label{fig8}
\end{figure}

The spaces $\widetilde{W_i}$ need to be glued back together. We begin by gluing together the $p$ copies of $[0,1]\times D^2$ with the clasp tangles removed, see Figure \ref{fig8}. After identifying the remaining faces intersecting the tangles, we obtain a handlebody of genus $(p-1)$. The tangles form a two-component link which lies inside this handlebody as shown in Figure \ref{fig9}.

\bigskip

\begin{figure}[!ht]
\centering
\includegraphics[width=0.4\textwidth]{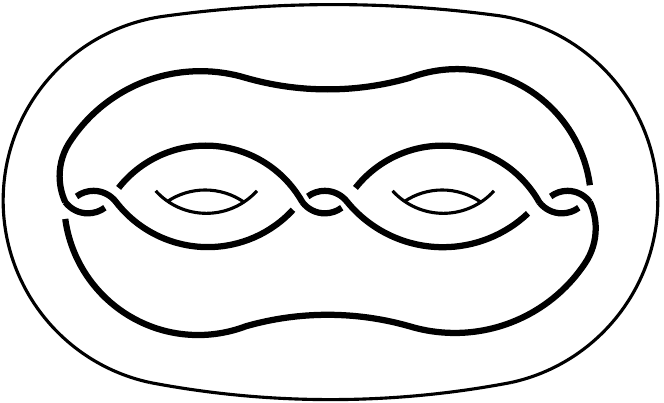}
\caption{The link in a handlebody of genus $(p-1)$ $(p=3)$}
\label{fig9}
\end{figure}

Note that the boundary of this handlebody is mapped into a single point in $Y_0$ hence we can make it into a closed $3$-manifold by attaching another handlebody via an arbitrary homeomorphism $h$ of the boundaries, and map this second handlebody into the same point. We choose $h$ so that the resulting $3$-manifold is the $3$-sphere. Now, filling in the solid tori that were removed from $Y$, we obtain a closed manifold $M$ whose surgery description is shown in Figure \ref{fig10}. 

\medskip

\begin{figure}[!ht]
\centering
\psfrag{0}{$0$}
\includegraphics[width=0.36\textwidth]{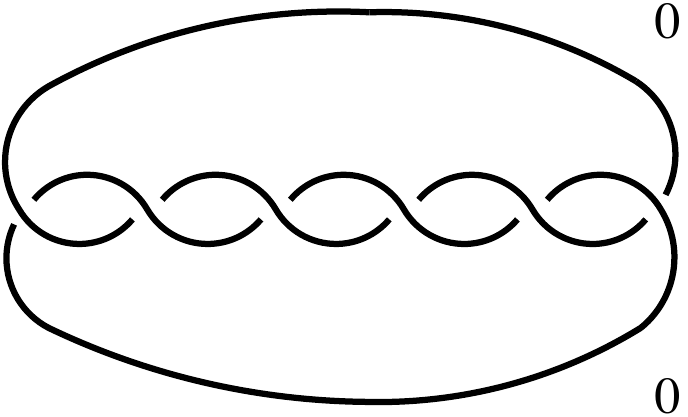}
\caption{Surgery description of $M$ ($p = 3$)}
\label{fig10}
\end{figure}

The $(2,2p)$ torus link in Figure \ref{fig10} may be right-handed (as shown) or left-handed. We will not attempt to pinpoint its handedness because changing it simply reverses the orientation of the manifold $M$, and whether the fundamental class of $M$ realizes the class $[e_1\times e_2 + e_2 \times e_1] \in H_3 (X)$ or its negative will not matter for the applications we have in mind. 


\subsection{Properties of the resolution}
We will next calculate the maps induced by the geometric realization $f = g\circ i: M \to Y\to X$ on the first and third homology groups.

The manifold $M$ is obtained by Dehn surgery on the $(2,2p)$ torus link shown in Figure \ref{fig10}. The meridians of this link, which will be denoted by $\alpha_1$ and $\alpha_2$, generate the cyclic factors in the group $H_1 (M) = \Z/p\,\oplus\,\Z/p$. On the other hand, it follows easily from the description of $Y$ in Section \ref{S:sing-rep} that $H_1(Y) = \Z/p\,\oplus\,\Z/p$, with generators the circle factors of the core torus $S^1 \times S^1 \subset Y$. We will next describe the map $g_*: H_1(M) \to H_1(Y)$ in terms of these generators.

 Up to isotopy, the curve $\alpha_1$ can be thought to wrap once around one meridian of one of the excised tubes in the tangle clasp, see Figure \ref{fig7}, and  $\alpha_2$ wrap around the other tube. This corresponds precisely to wrapping once around each of the excised tori in $A_1\times S^1$. One can further isotope these curves off of the excised tori and directly onto the meridian and the longitude of the core torus in $A_1\times S^1$. This is precisely the core torus in $Y$. The map $g$ will then map $\alpha_1$ and $\alpha_2$ to the standard generators in $H_1(Y) = \Z/p\,\oplus\,\Z/p$.

Keeping in mind that the inclusion $i: Y \to X$ obviously induces an isomorphism in the first homology, we obtain the following result.

\begin{proposition}\label{P:H1}
The map $f_*:\Z/p\,\oplus\,\Z/p\to \Z/p\,\oplus\,\Z/p$ induced by the geometric realization $f = g\circ i: M \to Y \to X$ sends the generators $\alpha_1$ and $\alpha_2$ to the standard generators of $H_1 (X)$.
\end{proposition}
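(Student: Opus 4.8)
The plan is to reduce the proposition to the two facts already assembled in this section: that the inclusion $i: Y \to X$ induces an isomorphism on first homology carrying the core-torus generators to the standard generators of $H_1(X) = \Z/p \oplus \Z/p$, and that $g_*: H_1(M) \to H_1(Y)$ sends the surgery meridians to the two circle-factor generators of the core torus. Granting these, the proposition is immediate by composition: $f_* = i_* \circ g_*$ carries $\alpha_1, \alpha_2$ to the standard generators of $H_1(X)$. The real content is therefore the computation of $g_*$, and the proof should be organized around fixing compatible generating sets on the two sides and then matching them.

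First I would pin down the two homology identifications. On the target, the description of $Y$ in Section \ref{S:sing-rep} as two solid tori attached to the core torus $S^1 \times S^1$ by the degree-$p$ maps $(z,w) \mapsto (z^p, w)$ and $(z,w) \mapsto (z, w^p)$ shows directly, via the cellular chain complex or Mayer--Vietoris, that $H_1(Y) = \Z/p \oplus \Z/p$ with generators the two circle factors of the core torus. On the source, $M$ is produced by Dehn surgery on the $(2,2p)$ torus link of Figure \ref{fig10}, and a routine Kirby-calculus or linking-matrix computation identifies $H_1(M) = \Z/p \oplus \Z/p$ with the two meridians $\alpha_1, \alpha_2$ as generators.

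The heart of the argument, and the step I expect to be the main obstacle, is the geometric bookkeeping that computes $g_*$. This is not an algebraic manipulation but a careful tracking of each meridian through the entire resolution: the cuts in the annulus $A$, the assembly of the blocks $W_i$, the blow-up of the singular point, the inside-out move producing the clasp tangle of Figure \ref{fig7}, and the final reassembly into the genus-$(p-1)$ handlebody. The claim to be verified is that, up to isotopy, $\alpha_1$ encircles one excised tube of the clasp tangle and $\alpha_2$ the other; that these two tubes correspond precisely to the two excised solid tori in the single block $A_1 \times S^1$; and that each meridian can then be isotoped off the excised tori directly onto the meridian, respectively the longitude, of the core torus of $A_1 \times S^1$, which \emph{is} the core torus of $Y$. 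Executing this requires keeping the identifications of Figures \ref{fig7}--\ref{fig10} consistent at every stage, and this visual tracking is where all the difficulty resides.

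Finally, I would close by combining the pieces. Since $g_*$ sends $\alpha_1, \alpha_2$ to the two circle-factor generators of the core torus in $H_1(Y)$, and since $i_*$ is an isomorphism sending those to the standard generators of $H_1(X)$, the composite $f_* = i_* \circ g_*$ has the asserted form. The only subtle point worth a remark is the ordering and orientation of the generators, i.e.\ whether $\alpha_1$ maps to the first standard generator and $\alpha_2$ to the second, or the two are interchanged; but the statement only asserts that the images \emph{are} the standard generators, and the handedness of the $(2,2p)$ torus link was already left unspecified (affecting at most the orientation of $M$), so this ambiguity does not affect the conclusion.
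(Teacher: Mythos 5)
Your proposal follows essentially the same route as the paper: the paper also computes $H_1(M)$ from the surgery meridians, identifies $H_1(Y)$ with the core-torus circle factors, tracks $\alpha_1,\alpha_2$ by isotopy through the clasp tangle onto the meridian and longitude of the core torus of $A_1\times S^1$, and concludes via the isomorphism $i_*\colon H_1(Y)\to H_1(X)$. The geometric bookkeeping you flag as the main obstacle is handled in the paper at the same level of detail you describe, so your outline is faithful to the actual argument.
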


We now consider how $f$ acts on $H_3(M)$. Since $M$ is a closed orientable 3-manifold, we know that $H_3(M)=\Z$.

\begin{proposition}\label{P:H3}
The homomorphism $f_*: H_3 (M) \to H_3 (X)$ induced by the map $f = g\circ i: M \to Y \to X$ is a homomorphism $\Z \to \Z/p$ taking $[M] \in H_3 (M)$ to plus or minus the generator $[e_1\times e_2 + e_2 \times e_1] \in H_3 (X)$.
\end{proposition}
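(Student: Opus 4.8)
The plan is to determine the induced map $f_* \colon H_3(M) \to H_3(X) = \Z \oplus \Z \oplus \Z/p$ by tracking where the fundamental class $[M]$ lands. Since $f$ factors through $Y$ via $f = g \circ i$ and the image of $g$ lies in the $3$-skeleton of $X$ with the cells $e_0 \times e_3$ and $e_3 \times e_0$ removed, the class $f_*[M]$ automatically has zero components in the two infinite cyclic summands of $H_3(X)$ generated by $e_0 \times e_3$ and $e_3 \times e_0$. Thus the map is forced to be of the form $\Z \to \Z/p$, landing in the torsion summand generated by $[e_1 \times e_2 + e_2 \times e_1]$, and the only remaining question is the value of the integer coefficient modulo $p$.

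First I would recall from Section \ref{S:sing-rep} that the inclusion-induced map $H_3(Y) \to H_3(X)$ carries the generator $[e_1 \times e_2 + e_2 \times e_1]$ of $H_3(Y) = \Z$ to the class of the same name in $H_3(X)$. So it suffices to understand the composite $g_* \colon H_3(M) \to H_3(Y)$ and show it carries $[M]$ to plus or minus a generator. This is precisely the property that was demanded of $g$ when $M$ was constructed in Section 3.2: the resolution of singularities was carried out so that $g$ induces an isomorphism $g_* \colon H_3(M) \to H_3(Y)$. The heart of the argument is therefore to verify that the by-hand construction genuinely achieves this isomorphism, i.e.\ that collapsing the blown-up $3$-balls and regluing does not kill or multiply the fundamental class.

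The cleanest way I would organize the degree computation is to observe that the map $g$ is a homeomorphism away from the lower-dimensional singular/blow-up locus. The resolution replaces singular points and arcs by cells of dimension at most two (the blown-up $3$-balls map to points, the slit $2$-spheres are collapsed, and the filled-in solid tori map to the cores $\{0\}\times S^1$ and $S^1 \times \{0\}$ which are $1$-dimensional in $Y$), so the preimage of a generic interior point of the top cell $e_1 \times e_2$ or $e_2 \times e_1$ of $Y$ is a single point at which $g$ is a local homeomorphism. Consequently $g$ has degree $\pm 1$ onto each top cell in the appropriate local sense, and hence $g_*[M] = \pm [e_1 \times e_2 + e_2 \times e_1]$ in $H_3(Y)$. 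Composing with the inclusion $i$ gives the claimed value in $H_3(X)$, and the sign ambiguity is exactly the orientation ambiguity already flagged after Figure \ref{fig10}.

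The main obstacle I anticipate is rigorously controlling the degree through the several cut-and-paste operations (making $p$ cuts, forming the $W_i$, blowing up, turning inside out, and regluing into $S^3$): one must ensure that the various identifications and the arbitrary regluing homeomorphism $h$ do not introduce cancellation among contributions to the fundamental class. I would handle this by checking that the singular set of $g$ has codimension at least two and that $g$ restricts to an orientation-coherent homeomorphism over the top-dimensional part of $Y$, so that a single local degree computation over a generic point suffices; the detailed bookkeeping of orientations in Figures \ref{fig7}--\ref{fig10} is the routine but delicate part, and it only affects the overall sign, which is immaterial for the intended applications.
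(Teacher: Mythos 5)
Your proposal is correct and takes essentially the same route as the paper: both reduce, via the computation of $H_3(Y)$ in Section \ref{S:sing-rep}, to showing that $g_*\colon H_3(M)\to H_3(Y)$ carries $[M]$ to plus or minus the generator, and both deduce this from the fact that $g$ is a homeomorphism over the generic points of the top cells of $Y$, so that a single local degree computation suffices. The only differences are cosmetic: the paper makes the local-degree step rigorous by composing with the map $\pi\colon Y\to S^3$ that collapses everything but one $3$-cell, turning the computation into an honest degree of $\pi\circ g$ between closed oriented $3$-manifolds; also, your remark that the singular set has ``codimension at least two'' is inaccurate (the singular set of $Y$ is the core torus, which has codimension one, and $g$ collapses some $3$-dimensional pieces of $M$ such as the blown-up balls and the added handlebody), but this is harmless because your argument only needs that the locus where $g$ fails to be a homeomorphism misses the preimage of a generic point of a top cell.
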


\begin{proof}
Following the calculation in Section \ref{S:sing-rep}, all we need to show is that $g_*: H_3 (M) \to H_3 (Y)$ is an isomorphism $\Z \to \Z$. We will accomplish this by constructing a map $\pi: Y \to S^3$ such that the composition $\pi \circ g: M \to S^3$ has degree $\pm 1$. 

Recall that $Y$ is obtained from its 2-skeleton by attaching 3-cells $e_1\times e_2$ and $e_2\times e_1$. Let $\pi: Y\to S^3$ be the map that collapses all of $Y$ but one of these two 3-cell to a point $p \in S^3$. The pre-image under $\pi$ of any point in $S^3$ other than $p$ contains exactly one point, with the local degree $\pm 1$. Since the map $g: M \to Y$ is a homeomorphism away from the codimension-one singular set of $Y$, the same is true about the map $\pi\circ g$. This means that $\pi\circ g$ is a map of degree $\pm 1$ between closed orientable manifolds $M$ and $S^3$.
\end{proof} 


\section{Cheeger---Simons characters}
In this section, we will study an invariant of CW-complexes derived from the differential characters of Cheeger and Simons \cite{CS}. For a given CW-complex $X$, our invariant will take the shape of a map
\begin{equation}\label{E:cs}
\cs_X: \R(X) \to \Hom (H_3(X),\mathbb R/\Z),
\end{equation}
where $\R(X)$ is the $SU(2)$--character variety of $\pi_1(X)$. The map $\cs$ should be viewed as an extension of the Chern--Simons invariant of 3-dimensional manifolds to manifolds of higher dimensions. We will first give the definition of $\cs_X$ and then proceed to evaluate it explicitly for manifolds $X$ which are squares and deleted squares of lens spaces.


\subsection{Definition of $\cs$}
All manifolds in this section are assumed to be smooth, compact and oriented. Two closed $n$-manifolds $M_1$ and $M_2$ are said to be oriented cobordant if there exists an $(n+1)$--manifold $W$ such that $\p W = -M_1\sqcup M_2$, where $-M_1$ denotes $M_1$ with reversed orientation. The manifold $W$ is then called an oriented cobordism from $M_1$ to $M_2$. The oriented cobordism classes of closed $n$-manifolds form an abelian group with respect to disjoint union. This group is denoted by $\Omega_n$ and called the $n$-th cobordism group.

Given a CW-complex $X$, consider the pairs $(M,f)$, where $M$ is a closed $n$-manifold, not necessarily connected, and $f: M \to X$ a continuous map. Two such pairs, $(M_1,f_1)$ and $(M_2,f_2)$, are said to be cobordant if there is an oriented cobordism $W$ from $M_1$ to $M_2$ and a continuous map $F: W \to X$ which restricts to $f_1$ on $M_1$ and to $f_2$ on $M_2$. The equivalence classes of the pairs $(M,f)$ form an abelian group $\Omega_n (X)$ called the $n$-th oriented cobordism group of $X$. These groups, which are functorial with respect to continuous maps of CW-complexes, give rise to a generalized homology theory called the oriented cobordism theory.

There is a natural homomorphism $\Omega_n (X) \to H_n (X)$ defined by $(M,f)\mapsto f_*\,[M]$, where $[M]$ is the fundamental class of $M$. This homomorphism is known to be an isomorphism for all $n\leq 3$, see for instance Gordon \cite[Lemma 2]{gordon}.  We will use this fact to construct our invariant \eqref{E:cs}.

For any representation $\alpha: \pi_1 X \to SU(2)$, we wish to define a homomorphism $\cs_X(\alpha): H_3 (X) \to \mathbb R/\Z$. Given a homology class $\zeta \in H_3(X)$, use the surjectivity of the map $\Omega_3 (X) \to H_3 (X)$ to find a geometric representative $f: M \to X$ with $f_*\,[M] = \zeta$. Define 
\begin{equation}\label{E:cs-def}
\cs_X (\alpha)(\zeta)\,=\,\cs_M (f^*\alpha),
\end{equation}
where 
\[
\cs_M (f^*\alpha)\;=\;\frac 1 {8\pi^2}\,\int_M\; \tr \left( A\wedge dA + \frac 2 3\;A \wedge A \wedge A\right)
\]

\medskip\noindent
is the value of the Chern-Simons functional on a flat connection $A$ with holonomy $f^*\alpha$. 

\begin{proposition}
The above definition results in a well-defined map $\cs_X: \R(X) \to \Hom\,(H_3(X),\mathbb R/\Z)$.
\end{proposition}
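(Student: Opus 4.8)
The plan is to check the three things that together make up the assertion of well-definedness: first, that the real number $\cs_M(f^*\alpha)\bmod\Z$ does not depend on the chosen geometric representative $f\colon M\to X$ of a class $\zeta\in H_3(X)$; second, that the resulting assignment $\zeta\mapsto\cs_X(\alpha)(\zeta)$ is additive, so that it defines a \emph{homomorphism} $H_3(X)\to\mathbb R/\Z$; and third, that $\cs_X(\alpha)$ depends only on the conjugacy class of $\alpha$, so that $\cs_X$ is genuinely defined on the character variety $\R(X)$ and not merely on the representation variety. Only the first of these is substantial.

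For independence of the representative, suppose $(M_1,f_1)$ and $(M_2,f_2)$ are closed oriented $3$--manifolds with $f_{1*}[M_1]=f_{2*}[M_2]=\zeta$. Because the natural homomorphism $\Omega_3(X)\to H_3(X)$, $(M,f)\mapsto f_*[M]$, is an isomorphism, the two pairs are cobordant: there is an oriented cobordism $W$ with $\p W=-M_1\sqcup M_2$ and a map $F\colon W\to X$ restricting to $f_1$ and $f_2$. Pulling $\alpha$ back along $F$ yields a representation $\pi_1(W)\to SU(2)$ and hence a flat connection $\mathbb A$ on $W$ whose restrictions to the boundary have holonomies $f_1^*\alpha$ and $f_2^*\alpha$. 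I would then apply Stokes' theorem to the Chern--Simons $3$--form $\operatorname{cs}(\mathbb A)=\tfrac{1}{8\pi^2}\tr(\mathbb A\wedge d\mathbb A+\tfrac23\,\mathbb A\wedge\mathbb A\wedge\mathbb A)$: since $\mathbb A$ is flat its curvature vanishes, so $d\operatorname{cs}(\mathbb A)=\tfrac{1}{8\pi^2}\tr(F_{\mathbb A}\wedge F_{\mathbb A})=0$, and therefore $\int_{\p W}\operatorname{cs}(\mathbb A)=\cs_{M_2}(f_2^*\alpha)-\cs_{M_1}(f_1^*\alpha)=0$.

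The delicate point is the reduction modulo $\Z$. The $\mathbb R$--valued integral defining $\cs_M$ presupposes a trivialization of the $SU(2)$--bundle, and only its class in $\mathbb R/\Z$ is trivialization--independent, the ambiguity being the integer degree of a gauge transformation $M\to SU(2)=S^3$. I would dispose of this by noting that $SU(2)$--bundles over $3$--manifolds, and over the compact $4$--manifold--with--boundary $W$, are all trivial: $W$ is homotopy equivalent to a CW complex of dimension at most $3$, while $BSU(2)=\mathbb{HP}^\infty$ is $3$--connected, so the classifying map is null--homotopic. Choosing a trivialization over $W$ and restricting it to $\p W$ makes the Stokes computation valid as stated, and since each $\cs_{M_i}\bmod\Z$ is independent of the trivialization used, the equality descends to $\mathbb R/\Z$.

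Granting this, additivity is formal: if $(M_1,f_1)$ and $(M_2,f_2)$ represent $\zeta_1,\zeta_2$, then $(M_1\sqcup M_2,f_1\sqcup f_2)$ represents $\zeta_1+\zeta_2$, and the defining integral is additive over disjoint unions, so $\cs_X(\alpha)(\zeta_1+\zeta_2)=\cs_X(\alpha)(\zeta_1)+\cs_X(\alpha)(\zeta_2)$. Likewise, conjugate representations $\alpha$ and $g\alpha g^{-1}$ pull back to gauge--equivalent flat connections on each $M$, on which the Chern--Simons functional agrees, so $\cs_X$ factors through conjugacy and is defined on $\R(X)$. I expect the cobordism--invariance step, together with the careful accounting of the $\mathbb R/\Z$ ambiguity through trivializations of the bundle over $W$, to be the crux; the additivity and conjugation--invariance steps are routine.
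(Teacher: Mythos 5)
Your proof is correct, and its skeleton is the same as the paper's: both use the injectivity of $\Omega_3(X)\to H_3(X)$ to produce an oriented cobordism $W$ and a map $F\colon W\to X$ interpolating between two geometric representatives of the same class, both reduce well-definedness to the statement that the Chern--Simons invariant of flat connections is a flat cobordism invariant, and both finish with the same disjoint-union argument for additivity and the same appeal to conjugation/gauge invariance. The difference lies in how the key invariance is established: the paper simply cites Atiyah--Patodi--Singer \cite{APS:II}, whereas you prove it directly, observing that the flat $SU(2)$-bundle over $W$ is trivial (since $W$ is homotopy equivalent to a complex of dimension at most $3$ while $BSU(2)$ is $3$-connected), trivializing over $W$, and applying Stokes' theorem to the Chern--Simons $3$-form, which is closed because the connection is flat; the residual dependence on the choice of trivialization is then correctly absorbed into the reduction mod $\Z$. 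Your version buys self-containedness and makes explicit exactly where the $\mathbb R/\Z$ quotient is needed---a point the paper leaves implicit in the citation---at the cost of some standard bundle-theoretic bookkeeping; the paper's version is shorter and defers that content to the literature.
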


\begin{proof}
Different choices of $A$ and different choices of $\alpha$ within its conjugacy class are known to preserve $\cs_M (f^*\alpha) \pmod \Z$. Since the map $\Omega_3 (X) \to H_3 (X)$ is injective, for any two choices of $f_1: M_1 \to X$ and $f_2: M_2 \to X$ there exists a cobordism $W$ from $M_1$ to $M_2$ and a continuous map $F: W \to X$ restricting to $f_1$ and $f_2$ on the two boundary components. In particular, the pull back representation $F^*\alpha: \pi_1 W \to SU(2)$ restricts to representations $f_1^*\,\alpha: \pi_1 M_1 \to SU(2)$ and $f_2^*\,\alpha: \pi_1 M_2 \to SU(2)$ on the fundamental groups of the two boundary components. This makes $(W,F^*\alpha)$ into a flat cobordism of pairs $(M_1,f_1^*\,\alpha)$ and $(M_2,f_2^*\,\alpha)$. Since the Chern-Simons functional is a flat cobordism invariant \cite{APS:II}, we conclude that $\cs_{M_1} (f_1^*\,\alpha)\,=\,\cs_{M_2} (f_2^*\,\alpha)$, making $\cs_X (\alpha)$ well-defined. That $\cs_X (\alpha)$ is a homomorphism follows easily from the fact that a geometric representative for a sum of homology classes can be chosen to be a disjoint union of geometric representatives of the summands. 
\end{proof}

\begin{proposition}\label{P:naturality}
For any continuous map $h: Y \to X$ the following diagram commutes

\[
\begin{CD}
\R(X) @>\cs_X>> \Hom\, (H_3(X),\mathbb{R}/\Z)\\
@VV h^* V @VV h^* V\\
\R(Y) @>\cs_Y>>  \Hom\, (H_3(Y),\mathbb{R}/\Z)
\end{CD}
\]
\smallskip
\end{proposition}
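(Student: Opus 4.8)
The plan is to verify commutativity by evaluating both composites of the diagram on an arbitrary pair $(\alpha,\zeta) \in \R(X) \times H_3(Y)$ and checking that they produce the same element of $\mathbb R/\Z$. First I would unwind the two vertical arrows: the left map sends a representation $\alpha: \pi_1 X \to SU(2)$ to the pulled-back representation $h^*\alpha = \alpha \circ h_*: \pi_1 Y \to SU(2)$, while the right map sends a homomorphism $\psi: H_3(X) \to \mathbb R/\Z$ to $\psi \circ h_*: H_3(Y) \to \mathbb R/\Z$, where $h_*: H_3(Y) \to H_3(X)$ is the induced map on homology. Thus the two routes around the diagram, evaluated at $\zeta$, read $\cs_Y(h^*\alpha)(\zeta)$ and $\cs_X(\alpha)(h_*\zeta)$ respectively, and the goal is to establish their equality.

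Next I would compute each side using the definition \eqref{E:cs-def}. Choose a geometric representative $f: M \to Y$ of the class $\zeta$, so that $f_*[M] = \zeta$; such an $f$ exists by the surjectivity of $\Omega_3(Y) \to H_3(Y)$. Then by definition $\cs_Y(h^*\alpha)(\zeta) = \cs_M(f^*(h^*\alpha))$. The crucial observation is that pulling back representations is functorial, $f^*(h^*\alpha) = (h \circ f)^*\alpha$, so this equals $\cs_M((h\circ f)^*\alpha)$. For the other side, the composite $h \circ f: M \to X$ satisfies $(h\circ f)_*[M] = h_*(f_*[M]) = h_*\zeta$, so $h \circ f$ is itself a geometric representative of $h_*\zeta$. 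The well-definedness of $\cs_X$ established in the previous proposition permits us to evaluate $\cs_X(\alpha)(h_*\zeta)$ using this particular representative, giving $\cs_X(\alpha)(h_*\zeta) = \cs_M((h\circ f)^*\alpha)$, which matches the left-hand side verbatim.

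There is essentially no analytic content here: once the representatives are chosen compatibly, both composites reduce to the value of the Chern--Simons functional of the single flat connection with holonomy $(h\circ f)^*\alpha$ on the single manifold $M$. The only point requiring care --- and the closest thing to an obstacle --- is the appeal to well-definedness: we are entitled to evaluate $\cs_X(\alpha)$ on the specific representative $h\circ f$ of $h_*\zeta$ precisely because the preceding proposition guarantees that the result is independent of the chosen representative. I would therefore organize the proof by first invoking that independence and then exhibiting the two identical computations above.
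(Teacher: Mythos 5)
Your proof is correct and follows essentially the same route as the paper's: choose a geometric representative $f: M \to Y$ of $\zeta$, observe that $h \circ f$ then represents $h_*\zeta$, and conclude that both composites equal $\cs_M\bigl((h\circ f)^*\alpha\bigr) = \cs_M(f^*h^*\alpha)$. Your explicit appeal to the well-definedness of $\cs_X$ when evaluating on the representative $h\circ f$ is a point the paper leaves implicit, but it is the same argument.
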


\begin{proof}
Given a representation $\alpha: \pi_1 X \to SU(2)$ and a homology class $\zeta \in H_3 (Y)$, we wish to show that $\cs_Y (h^*\alpha) (\zeta)\,=\,\cs_X(\alpha)(h_*\zeta)$. Choose a geometric representative $f: M \to Y$ for the class $\zeta$ so that $f_*\,[M] =\zeta$ then the composition $h\circ f: M \to Y \to X$ will give a geometric representative for the class $h_*\zeta$ because $(h\circ f)_*\,[M] = h_* (f_*\,[M]) = h_*\zeta$. The formula now follows because one can easily see that both sides of it are equal to $\cs_M (f^*h^*\alpha)$.
\end{proof}

\begin{remark}
An equivalent way to define the invariant \eqref{E:cs-def} is by pulling back the universal Chern--Simons class $\cs: H_3 (BSU(2)^{\delta}) \to \mathbb R/\Z$, see for instance \cite[Section 5]{KK}. To be precise, let $SU(2)^{\delta}$ stand for $SU(2)$ with the discrete topology and $BSU(2)^{\delta}$ for its classifying space. Every representation $\alpha: \pi_1 X \to SU(2)$ gives rise to a continuous map $X \to BSU(2)^{\delta}$ which is unique up to homotopy equivalence. Then $\cs_X (\alpha): H_3 (X) \to \mathbb R/\Z$ is the pull back of the universal Chern--Simons class $\cs$ via the induced homomorphism $H_3 (X) \to H_3 (BSU(2)^{\delta})$.
\end{remark}


\subsection{Squares of lens spaces}
Given a lens space $L = L(p,q)$, consider its square $X = L\,\times\,L$ and let $1 \in \pi_1 (L) = \Z/p$ be the canonical generator. Since $\pi_1 (X) = \Z/p\,\oplus\,\Z/p$ is abelian, every representation $\alpha: \pi_1 (X) \to SU(2)$ can be conjugated to a representation $\alpha (k,\ell): \Z/p\,\oplus\,\Z/p \to U(1)$ sending the canonical generators of the fundamental groups of the two factors to $\exp(2\pi ik/p)$ and $\exp(2\pi i\ell/p)$ with $0 \le k, \ell \le p-1$. Among these abelian representations, the only ones conjugate to each other are $\alpha(k,\ell)$ and $\alpha(p-k,p-\ell)$, via the unit quaternion $j \in SU(2)$. Therefore, the character variety $\R(X)$ consists of $(p^2+1)/2$ points for $p$ odd and $(p^2+4)/2$ points for $p$ even. For each of these representations, we wish to compute the map:
\[
\cs(\alpha(k,\ell)): H_3 (X) \to \mathbb R/\Z.
\]
Since $\cs(\alpha(k,\ell))$ is a homomorphism, it will be sufficient to compute it on a set of generators of $H_3(X) = \Z\,\oplus\,\Z\,\oplus\,\Z/p$, see Section \ref{S:squares}.

We begin with the infinite cyclic summands in $H_3(X)$, realized geometrically by embedding $L$ into $X$ as the factors $L\,\times\,e_0$ and $e_0\,\times\,L$. 

\begin{proposition}\label{P:cs-one}
The values of $\cs(\alpha(k,\ell))$ on the above free generators of $H_3 (X)$ are $-k^2r/p$ and $-\ell^2r/p$, where $r$ is any integer such that $qr = -1\pmod p$.
\end{proposition}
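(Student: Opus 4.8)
The plan is to use the definition of $\cs$ to reduce each value to the Chern--Simons invariant of a flat connection on the single lens space $L$, and then to evaluate that invariant by a bounding $4$--manifold together with the linking form of $L$.

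Realize the first free generator $L\times e_0 \in H_3(X)$ by the embedding $f\colon L\to X$, $x\mapsto (x,e_0)$, of $L$ as the first factor, so that $f_*[L] = L\times e_0$. By the definition \eqref{E:cs-def} of $\cs$ we then have
\[
\cs_X(\alpha(k,\ell))(L\times e_0)\;=\;\cs_L(f^*\alpha(k,\ell)).
\]
On fundamental groups $f$ induces the inclusion $\Z/p\to\Z/p\oplus\Z/p$, $1\mapsto(1,0)$, so $f^*\alpha(k,\ell)$ is the $U(1)$--representation $\beta_k\colon\pi_1(L)\to U(1)\subset SU(2)$ sending the canonical generator to $\exp(2\pi ik/p)$. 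The generator $e_0\times L$ is treated by the symmetric embedding and yields $\beta_\ell$. It therefore suffices to prove that $\cs_L(\beta_k) = -k^2 r/p$ for every $k$, with $qr\equiv -1\pmod p$; the value $-\ell^2 r/p$ then follows verbatim.

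Next I would compute the Chern--Simons invariant on $L = L(p,q)$ of the flat connection with holonomy $\beta_k$. Viewed in $SU(2)$ this connection is reducible, of the form $\diag(a,-a)$ for a flat $U(1)$--connection $a$ on the flat line bundle $E_k = S^3\times_{\Z/p}\C_k$, and the cubic term $\tr(A\wedge A\wedge A)$ in the integrand drops out because the structure group is abelian. Choose a compact oriented $4$--manifold $W$ with $\partial W = L$ and a Hermitian line bundle $\mathcal L\to W$ with connection restricting to $(E_k,a)$ over the boundary. Applying Stokes' theorem to the second Chern form and using $\frac{1}{8\pi^2}\tr(F\wedge F) = -c_2(\mathcal L\oplus\mathcal L^{-1}) = c_1(\mathcal L)^2$, one gets
\[
\cs_L(\beta_k)\;\equiv\;\pm\int_W c_1(\mathcal L)^2 \pmod{\Z},
\]
so the problem is reduced to the relative self--intersection number $\int_W c_1(\mathcal L)^2\in\Q$ taken modulo $\Z$.

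This relative self--intersection is, up to sign, the self--pairing of the class $k\in H_1(L)=\Z/p$ under the linking form of $L(p,q)$. Taking $W$ to be the linear plumbing whose intersection form presents $H_1(L)$ with its linking form---for instance the disk bundle over $S^2$ of Euler number $-p$ when $q=1$, and the plumbing prescribed by the continued fraction expansion of $p/q$ in general---one evaluates this pairing as $-rk^2/p\pmod\Z$ with $qr\equiv-1\pmod p$, which is the classical linking form of the lens space written in terms of the canonical generator. I expect the main obstacle to be the orientation and sign bookkeeping: the orientation of $L$ induced from $X$, the orientation of $W$ relative to $L=\partial W$, the normalization of $\cs$, and the identification of the generator $k$ of $H_1(L)$ all feed into the final sign, and these must be reconciled so that the answer is exactly $-k^2 r/p$ rather than its negative. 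The remaining computation of the plumbing self--intersection is routine once these conventions are fixed. Alternatively, one may quote the known Chern--Simons formulas for flat connections on lens spaces due to Kirk--Klassen and Auckly and match conventions.
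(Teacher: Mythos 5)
Your first step---realizing the free generators by the factor embeddings and using the definition \eqref{E:cs-def} to reduce everything to $\cs_L(\beta_k)$ on the lens space itself---is exactly the paper's argument; the paper then finishes in one line by quoting the explicit computation of Chern--Simons invariants of flat connections on lens spaces from Kirk--Klassen \cite[Theorem 5.1]{KK}. Where you genuinely differ is in evaluating $\cs_L(\beta_k)$ from scratch: extending the flat line bundle over a bounding $4$--manifold $W$, reducing to a relative Chern number by Chern--Weil and Stokes, and identifying that number with the linking-form self-pairing of $L(p,q)$ computed from a plumbing. This is a legitimate and standard route (it is essentially how such formulas are proved in the first place), and it has the virtue of explaining \emph{why} the answer is the linking form; the paper's citation is shorter and, more importantly, comes with fixed conventions. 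Two cautions about your sketch. First, there is a sign slip in the Chern--Weil step: for $E=\mathcal{L}\oplus\mathcal{L}^{-1}$ one has $c(E)=(1+c_1(\mathcal{L}))(1-c_1(\mathcal{L}))$, hence $\frac{1}{8\pi^2}\tr(F\wedge F)=c_2(E)=-c_1(\mathcal{L})^2$, not $+c_1(\mathcal{L})^2$; your ``$\pm$'' absorbs this, but it is one of the signs you would need to track. Second, the proposition asserts the value $-k^2r/p$ with a definite sign, while your argument defers the sign to unperformed orientation bookkeeping; to make this a complete proof you must either carry that bookkeeping to the end (for instance, calibrating conventions on $L(p,1)$ as the boundary of the disk bundle over $S^2$ of Euler number $-p$) or do what the paper does and invoke the Kirk--Klassen formula, which you correctly list as a fallback.
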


\begin{proof}
With respect to the aforementioned embedding, the representation $\alpha(k,\ell)$ pulls back to representations $\pi_1 (L) \to SU(2)$ sending the canonical generator to $\exp(2\pi ik/p)$ and $\exp(2\pi i\ell/p)$, respectively. The Chern--Simons invariants of such representations were computed explicitly in Kirk--Klassen \cite[Theorem 5.1]{KK}. 
\end{proof}

The torsion part of $H_3 (X)$ is generated by the class $[e_2\, \times\, e_1 + e_1\, \times\, e_2]$ which was realized, up to a sign, by a continuous map $f: M \to X$ in Section \ref{S:sing}. A surgery description of the manifold $M$ is shown in Figure \ref{fig10}. We wish to compute the Chern--Simons function on $M$ for the induced representations $f^*\alpha(k,\ell): \pi_1 (M) \to SU(2)$. 

\begin{lemma}\label{L:seifert}
The manifold $M$ is Seifert fibered over the 2-sphere, with the unnormalized Seifert invariants $(p,-1)$, $(p,-1)$, and $(p,1)$.
\end{lemma}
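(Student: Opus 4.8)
The plan is to exploit the Seifert fibred structure that $S^3$ already carries and to view $M$ as the result of surgery on two of its fibres. Concretely, I would first read off from Figure \ref{fig10} that $M$ is obtained by $0$--framed surgery on both components of the $(2,2p)$ torus link; this is forced by the facts established earlier, since the two components have linking number $p$ and the meridians $\alpha_1,\alpha_2$ generate $H_1(M)=\Z/p\oplus\Z/p$ with order $p$, so the linking matrix $\left(\begin{smallmatrix}0&p\\p&0\end{smallmatrix}\right)$ must have Smith normal form $\diag(p,p)$. Next I would put on $S^3=\{|z|^2+|w|^2=1\}$ the Seifert fibration coming from the circle action $t\cdot(z,w)=(tz,t^pw)$. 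Its regular fibres are $(1,p)$ curves lying on the tori $|z|=\text{const}$, and it has a single exceptional fibre, the circle $\{z=0\}$, of multiplicity $p$. Two distinct regular fibres on one such torus are parallel $(1,p)$ curves, whose union is precisely a $(2,2p)$ torus link; so I can arrange the surgery components $K_1,K_2$ to be regular fibres of this fibration.

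With this picture in place, $M$ is obtained from the Seifert fibred $S^3$ by removing fibred neighbourhoods of $K_1$ and $K_2$ and regluing, producing a Seifert fibred space over $S^2$ with two new exceptional fibres in addition to the original one. The heart of the argument is to compute the invariants of the two new fibres. For a regular fibre $K_i$ let $\mu$ and $\lambda$ denote its meridian and $0$--framed longitude in $S^3$, and let $h$ denote the fibre direction. Since nearby fibres are parallel $(1,p)$ curves, which link $K_i$ exactly $p$ times, the fibre framing is $p$, i.e.\ $h=\lambda+p\mu$; moreover the meridian of the fibred neighbourhood is $\mu$, and it serves as a section curve. The $0$--framed surgery refills so that $\lambda=h-p\mu$ bounds the new meridian disk, so in the section/fibre basis the filling slope is $-p\cdot(\text{section})+1\cdot h$. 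By the standard dictionary this creates an exceptional fibre with unnormalised invariant $(-p,1)\sim(p,-1)$. Thus each of the two surgeries contributes a $(p,-1)$ fibre, while a direct check of the local rotation data identifies the pre-existing exceptional fibre as $(p,1)$, consistent with the unsurgered manifold being the homology sphere $S^3$.

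Assembling the three fibres shows that $M$ is Seifert fibred over $S^2$ with exceptional fibres $(p,-1),(p,-1),(p,1)$, as claimed. As a consistency check I would recompute $H_1$ from the presentation $\langle x_1,x_2,x_3,h\mid [x_i,h],\,x_i^{p}h^{\beta_i},\,x_1x_2x_3\rangle$ and confirm it is $\Z/p\oplus\Z/p$, and verify that the Euler number $e=-(\beta_1/\alpha_1+\beta_2/\alpha_2+\beta_3/\alpha_3)=1/p$ gives $|H_1|=|\alpha_1\alpha_2\alpha_3\,e|=p^2$, matching Proposition \ref{P:H1}. I expect the main obstacle to be the framing and sign bookkeeping: pinning down the relation $h=\lambda+p\mu$, translating the $S^3$ surgery slope into the section/fibre coordinates on $\partial N(K_i)$, and fixing the normalisation and orientation conventions so that the new fibres come out as $(p,-1)$ rather than $(p,1)$ (an overall orientation reversal of $M$, corresponding to the unspecified handedness of the torus link in Figure \ref{fig10}, would swap these). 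All remaining steps are routine once these conventions are fixed.
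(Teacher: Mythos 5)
Your argument is correct, but it takes a genuinely different route from the paper's. The paper argues in the opposite direction: it starts from the Seifert manifold with unnormalized invariants $(p,-1)$, $(p,-1)$, $(p,1)$, presents it by the plumbing diagram of Figure \ref{fig11}, and then blows down the chain of $(-1)$- and $(-2)$-framed vertices to arrive at exactly the $0$-framed $(2,2p)$ torus link of Figure \ref{fig10}; the entire verification is a short Kirby-calculus computation. You instead work intrinsically: you recognize the two surgery components as regular fibres of the Seifert fibration of $S^3$ induced by the action $t\cdot(z,w)=(tz,t^pw)$, whose only exceptional fibre $\{z=0\}$ has multiplicity $p$, and you compute how $0$-surgery on a regular fibre --- whose fibre framing is $p$, since nearby fibres link it $p$ times --- turns it into an exceptional fibre with unnormalized invariant $(-p,1)\sim(p,-1)$. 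This has real advantages: it explains conceptually why $M$ is Seifert fibered at all (surgery along fibres preserves the fibration), and it produces directly the group presentation used later in the proof of Proposition \ref{P:cs-two}. The cost is the section/orientation bookkeeping you flag; but since the paper deliberately leaves the handedness of the link, hence the orientation of $M$, unspecified, only the relative signs of the three $\beta_i$ matter, and your closing check that $|H_1|=|\alpha_1\alpha_2\alpha_3\, e|=p^2$ (together with the symmetry forcing the two surgered fibres to carry equal invariants) does pin these down up to the harmless mirror ambiguity between $(p,-1),(p,-1),(p,1)$ and $(p,1),(p,1),(p,-1)$. One small correction: the $0$-framings are \emph{not} forced by $H_1(M)=\Z/p\,\oplus\,\Z/p$ together with linking number $p$ --- for instance, framings $2p$ and $0$ give a linking matrix with the same Smith normal form $\diag(p,p)$ --- but nothing is lost, since the framings are simply read off from Figure \ref{fig10}.
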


\begin{proof}
The Seifert fibered manifold with the unnormalized Seifert invariants $(p,-1)$, $(p,-1)$, and $(p,1)$ can be obtained by plumbing on the diagram shown in Figure \ref{fig11}. 
\smallskip

\begin{figure}[!ht]
\centering
\includegraphics[width=0.45\textwidth]{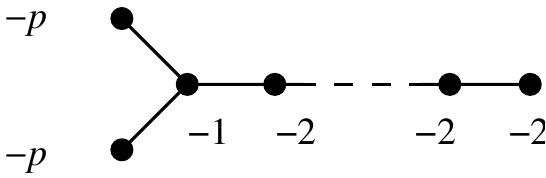}
\caption{Plumbing diagram}
\label{fig11}
\end{figure}

\noindent
One can easily see that blowing down the chain of $p$ vertices framed by $-1$ and $-2$ results in the framed link shown in Figure \ref{fig10}.
\end{proof}

\begin{proposition}\label{P:cs-two}
The function $\cs(\alpha(k,\ell))$ takes value $\pm\, 2k\ell/p$ on the generator $[e_2 \times e_1 + e_1 \times e_2] \in H_3 (X)$.
\end{proposition}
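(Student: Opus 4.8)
The plan is to compute the Chern--Simons invariant of the pulled-back representation $f^*\alpha(k,\ell)$ on the Seifert fibered manifold $M$ identified in Lemma \ref{L:seifert}. First I would use Proposition \ref{P:H3} to recall that $f_*[M] = \pm[e_2\times e_1 + e_1\times e_2]$, so that by the definition \eqref{E:cs-def} of $\cs$ the value $\cs(\alpha(k,\ell))$ on this torsion generator equals $\pm\,\cs_M(f^*\alpha(k,\ell))$. The sign is harmless because, as noted after Figure \ref{fig10}, reversing the handedness of the torus link merely reverses the orientation of $M$, and this accounts for the $\pm$ in the statement. Thus the whole computation reduces to evaluating the Chern--Simons functional of a specific flat $SU(2)$ connection on a Seifert fibered homology object with three exceptional fibers of multiplicity $p$.

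Next I would pin down the representation $f^*\alpha(k,\ell): \pi_1(M) \to SU(2)$ explicitly. Using Proposition \ref{P:H1}, the meridians $\alpha_1$ and $\alpha_2$ of the $(2,2p)$ torus link generate $H_1(M) = \Z/p \oplus \Z/p$ and map to the standard generators of $H_1(X)$, so $\alpha(k,\ell)$ pulls back to the abelian representation sending $\alpha_1 \mapsto \exp(2\pi i k/p)$ and $\alpha_2 \mapsto \exp(2\pi i \ell/p)$ into the maximal torus $U(1) \subset SU(2)$. The task is then to match these meridians with the natural generators of the Seifert fundamental group coming from the exceptional fibers with invariants $(p,-1)$, $(p,-1)$, $(p,1)$, so that the representation is described in the form for which Chern--Simons formulas for Seifert manifolds are available. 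Here I would invoke the established Chern--Simons theory on Seifert fibered spaces (of the type in Kirk--Klassen \cite{KK} or Auckly, via the rho-invariant or the explicit plumbing formula), which expresses $\cs_M$ of a flat connection as a sum of contributions from the exceptional fibers given by quadratic expressions in the rotation numbers divided by the fiber multiplicities.

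The main obstacle I expect is the careful bookkeeping that turns the three local fiber contributions into the clean closed form $\pm\,2k\ell/p$. Each exceptional fiber of type $(p,\pm 1)$ contributes a term quadratic in the holonomy rotation numbers modulo $p$, and one must correctly read off which rotation numbers at the three fibers are determined by $k$ and $\ell$ through the plumbing/surgery presentation. The arithmetic involves inverting $p$ and the Seifert data modulo $p$ and summing three quadratic terms; the expectation is that the pure-square terms in $k$ and in $\ell$ cancel against the $(p,1)$ fiber contribution, leaving only the cross term proportional to $k\ell$. I would therefore organize the proof so that the formula for $\cs$ of a flat connection on a Seifert manifold is stated first, the three rotation numbers are identified from Figure \ref{fig11} and the gluing in Section \ref{S:sing}, and only then the summation is performed, isolating the surviving $2k\ell/p$ term and folding the orientation ambiguity into the $\pm$ sign.
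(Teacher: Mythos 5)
Your proposal follows the paper's proof essentially verbatim: the paper likewise reduces to computing $\cs_M(f^*\alpha(k,\ell))$ on the Seifert fibered manifold of Lemma \ref{L:seifert}, identifies the pulled-back representation on the standard Seifert presentation of $\pi_1(M)$ (sending the exceptional-fiber generators to $e^{2\pi ik/p}$, $e^{2\pi i\ell/p}$, $e^{-2\pi i(k+\ell)/p}$), and applies Auckly's formula to get $-k^2/p - \ell^2/p + (k+\ell)^2/p = 2k\ell/p$, absorbing the orientation ambiguity of $M$ into the $\pm$ sign exactly as you describe. Your anticipated cancellation of the pure-square terms against the $(p,1)$ fiber contribution is precisely what happens in that computation.
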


\begin{proof}
This can be done using Auckly's paper \cite{auckly} and the fact that the manifold $M$ is Seifert fibered, see Lemma \ref{L:seifert}. To be precise, consider the standard presentation
\[
\langle\;h,x_1,x_2,x_3\;|\; h\;\text{central},\, x_1^p=h,\, x_2^p=h,\, x_3^p = h^{-1},\, x_1 x_2 x_3 = 1\;\rangle
\] 
of the fundamental group of $M$, where $x_1$, $x_2$ and $x_3$ are meridians of the circles in Figure ? framed respectively by $-p$, $-p$ and $p$. One can easily check that the representation $\alpha(k,\ell)$ acts on the generators as follows\,:
\[
h \to 1,\quad x_1\to e^{2\pi i k/p},\quad x_2\to e^{2\pi i \ell/p},\quad x_3\to e^{-2\pi i (k+\ell)/p}.
\]
Therefore, $\alpha(k,\ell) = \omega(0,k,\ell,k+\ell)[1,1,j]$ in Auckly's terminology on page 231 of \cite{auckly}, and the Chern--Simons function for this representation computed on page 232 of \cite{auckly} equals
\[
-\frac {k^2} p - \frac {\ell^2} p + \frac {(k+\ell)^2} p = \frac {2k\ell} p.
\]
Since $M$ realizes $[e_2 \times e_1 + e_1 \times e_2]$ up to sign, the result follows.
\end{proof}


\subsection{Deleted squares of lens spaces}
We will next compute the map $\cs_{X_0}: \R(X_0) \to \Hom(H_3(X_0),\mathbb R/\Z)$ for deleted squares $X_0$ of lens spaces. Since the inclusion $i: X_0 \to X$ induces an isomorphism of fundamental groups, every $SU(2)$ representation of $\pi_1(X_0)$ is the pull back via $i$ of a representation $\alpha(k,\ell): \pi_1 (X) \to SU(2)$. Choose the generators in $H_3 (X_0) = \Z\,\oplus\,\Z/p$ as in Lemma \ref{L:gens} so that the map $i_*: H_3 (X_0) \to H_3 (X)$ has the property that $i_*(1,0) = (1,1,0)$ and $i_*(0,1) = (0,0,1)$ with respect to the canonical basis of $H_3 (X) = \Z\,\oplus\,\Z\,\oplus\,\Z/p$.

\begin{proposition}\label{P:cs-del}
Let $r$ be any integer such that $qr = -1\pmod p$. Then the values of $\cs_{X_0}(\alpha(k,\ell)): H_3 (X_0) \to \mathbb R/\Z$ on the above generators of $H_3 (X_0)$ are given by the formulas 
\[
\cs_{X_0}(i^*\alpha(k,\ell))(1,0) = -r(k^2+\ell^2)/p\quad\text{and}\quad \cs_{X_0}(i^*\alpha(k,\ell))(0,1) = \pm\, 2k\ell/p.
\]
\end{proposition}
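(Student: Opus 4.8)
The plan is to deduce this proposition directly from the naturality of $\cs$, using the values already computed on the square $X$. Applying Proposition \ref{P:naturality} to the inclusion $i: X_0 \to X$ gives, for every homology class $\zeta \in H_3(X_0)$, the identity
\[
\cs_{X_0}(i^*\alpha(k,\ell))(\zeta) \;=\; \cs_X(\alpha(k,\ell))(i_*\zeta).
\]
Thus the whole computation reduces to evaluating the already-understood homomorphism $\cs_X(\alpha(k,\ell))$ on the images under $i_*$ of the two chosen generators of $H_3(X_0)$, and those images are exactly the ones recorded in Lemma \ref{L:gens}.

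First I would handle the free generator. By Lemma \ref{L:gens} we have $i_*(1,0) = (1,1,0)$ in the canonical basis $e_0 \times e_3$, $e_3 \times e_0$, $[e_2 \times e_1 + e_1 \times e_2]$ of $H_3(X) = \Z\,\oplus\,\Z\,\oplus\,\Z/p$. Since $\cs_X(\alpha(k,\ell))$ is a homomorphism, its value on $(1,1,0)$ is the sum of its values on the two free generators $e_0 \times e_3$ and $e_3 \times e_0$. These are realized by the embeddings $e_0 \times L$ and $L \times e_0$, on which the pulled-back representations send the canonical generator of $\pi_1(L)$ to $\exp(2\pi i \ell/p)$ and $\exp(2\pi i k/p)$, respectively; Proposition \ref{P:cs-one} then returns the values $-\ell^2 r/p$ and $-k^2 r/p$. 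Adding these yields $-r(k^2+\ell^2)/p$, which is the first formula.

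Next, for the torsion generator, Lemma \ref{L:gens} gives $i_*(0,1) = (0,0,1)$, namely the class $[e_2 \times e_1 + e_1 \times e_2]$. Its value under $\cs_X(\alpha(k,\ell))$ was already computed in Proposition \ref{P:cs-two} to be $\pm\,2k\ell/p$, which is precisely the second formula. The sign ambiguity is inherited from the orientation ambiguity in the geometric realization $f: M \to X$ of Section \ref{S:sing}; as noted there, it does not affect the intended applications, so I would not try to resolve it.

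I do not expect a genuine obstacle here: the argument is an assembly of three previously established results through a single application of naturality. The one point requiring care is the bookkeeping that matches the geometric realizations $e_0 \times L$ and $L \times e_0$ with the cellular classes $e_0 \times e_3$ and $e_3 \times e_0$, and correctly tracks which factor representation carries the parameter $k$ and which carries $\ell$. A slip there would interchange $k$ and $\ell$ in the free-generator computation, though since the resulting expression $-r(k^2+\ell^2)/p$ is symmetric in $k$ and $\ell$, the final answer is in fact insensitive to this choice.
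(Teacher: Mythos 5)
Your proposal is correct and follows essentially the same route as the paper's own proof: both apply the naturality of $\cs$ (Proposition \ref{P:naturality}) to the inclusion $i: X_0 \to X$, use Lemma \ref{L:gens} to identify $i_*(1,0) = (1,1,0)$ and $i_*(0,1) = (0,0,1)$, and then evaluate via Propositions \ref{P:cs-one} and \ref{P:cs-two}, with the sign ambiguity inherited from the geometric realization exactly as you describe.
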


\begin{proof}
This follows from the naturality property of the Chern--Simons map, see Proposition \ref{P:naturality}, and the calculation of $\cs_X$ in Propositions \ref{P:cs-one},
\begin{multline}\notag
\cs_{X_0}(i^*\alpha(k,\ell))(1,0) = \cs_X(\alpha(k,\ell))(1,1,0) \\ = \cs_X(\alpha(k,\ell))(1,0,0) + \cs_X(\alpha(k,\ell))(0,1,0) = -r(k^2 + \ell^2)/p
\end{multline}
and in Proposition \ref{P:cs-two},
\[
\cs_{X_0}(i^*\alpha(k,\ell))(0,1) = \cs_X(\alpha(k,\ell))(0,0,1) = \pm\,2k\ell/p.
\]
\end{proof}

From now on, we will choose generators in $\pi_1 (X_0) = \Z/p\,\oplus\,\Z/p$ so that $i_*: \pi_1 (X_0) \to \pi_1 (X)$ is the identity map, and denote $i^*\alpha(k,\ell): \pi_1 (X_0) \to SU(2)$ by simply $\alpha(k,\ell)$.


\section{Homotopy equivalence of deleted squares}\label{S:heq}
Let $X_0$ and $X'_0$ be the deleted squares of lens spaces $L(p,q)$ and $L(p',q')$, respectively. Suppose that $X_0$ and $X'_0$ are homotopy equivalent via a homotopy equivalence $f: X'_0 \to X_0$. Since $f_*: \pi_1 X'_0 \to \pi_1 X_0$ is an isomorphism, we immediately conclude that $p = p'$. Using the canonical isomorphisms $\pi_1 X_0 = \Z/p\,\oplus\,\Z/p$ and $\pi_1 X'_0 = \Z/p\,\oplus\,\Z/p$, the isomorphism $f_*: \pi_1 X'_0 \to \pi_1 X_0$ is given by a matrix 
\begin{equation}\label{E:pi1}
f_* = \begin{pmatrix} \alpha & \gamma \\ \beta & \delta \end{pmatrix}\; \in\; GL (2,\Z/p).
\end{equation}
In particular, for any representation $\alpha(k,\ell): \pi_1 X_0 \to SU(2)$, we have $f^*\alpha(k,\ell) = \alpha(k',\ell')$, where 
\begin{equation}\label{E:kl}
k' = \alpha k + \beta \ell \quad\text{and} \quad \ell' = \gamma k + \delta \ell.
\end{equation}

Next, the naturality property of Proposition \ref{P:naturality} implies that we have a commutative diagram 

\[
\begin{CD}
\R(X_0) @> \quad \cs_{X_0} \quad >> \Hom(H_3(X_0),\mathbb{R}/\Z)\\
@VV f^* V @VV f^* V\\
\R(X'_0) @> \quad \cs_{X'_0} \quad >>  \Hom(H_3(X'_0),\mathbb{R}/\Z)
\end{CD}
\]

\bigskip\noindent
With respect to the generators of $H_3 (X_0) = \Z\,\oplus\,\Z/p$ and $H_3 (X'_0) = \Z\,\oplus\,\Z/p$ chosen as in Lemma \ref{L:gens}, the isomorphism $f_*: H_3 (X'_0) \to H_3 (X_0)$ can be described by a matrix
\begin{equation}\label{E:f*}
f_*\;=\;\begin{pmatrix} \ep & 0 \\ a & b\end{pmatrix},
\end{equation}
where $\ep = \pm 1$ and $b$ is a unit in the ring $\Z/p$. The commutativity of the above diagram means exactly that, for any choice of $k$ and $\ell$, one has the commutative diagram

\[
\begin{CD}
H_3 (X'_0) @> \qquad \cs_{X'_0} (\alpha(k',\ell')) \qquad >> \mathbb R/\Z \\
@V f_* VV @VV= V\\
H_3 (X_0) @> \qquad \cs_{X_0} (\alpha(k,\ell)) \qquad >> \mathbb R/\Z 
\end{CD}
\]

\bigskip\noindent
where the integers $k',\ell'$ are given by the equations \eqref{E:kl} and the isomorphism $f_*$ by the equation \eqref{E:f*}. 

By evaluating on the generators $(1,0) \in H_3 (X'_0)$ and $(0,1) \in H_3 (X'_0)$ and using Proposition \ref{P:cs-del}, we easily conclude that the commutativity of the above diagram is equivalent to the following equations holding true for any choice of integers $k$ and $\ell$\,:
\medskip
\begin{gather}
-\ep r (k^2 + \ell^2) \pm 2ak\ell = -r'((\alpha k+ \beta \ell)^2 + (\gamma k + \delta \ell)^2)\pmod p \label{E:one} \\
\pm\,2bk\ell = 2 (\alpha k + \beta \ell)(\gamma k + \delta \ell) \pmod p \label{E:two}
\end{gather}

\medskip

\begin{proposition}
If the deleted squares of lens spaces are homotopy equivalent then the lens spaces themselves are homotopy equivalent. The latter homotopy equivalence is orientation preserving if and only if $\ep = 1$ in \eqref{E:f*}.
\end{proposition}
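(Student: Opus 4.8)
The plan is to squeeze all the algebra out of the two identities \eqref{E:one} and \eqref{E:two}, and then to turn the resulting congruence into an \emph{explicit} map of lens spaces whose degree is recorded by the sign $\ep$ of \eqref{E:f*}, so that both assertions fall out at once. Since \eqref{E:two} holds for all integers $k,\ell$ and $p$ is odd, I would first compare the coefficients of $k^2$, $\ell^2$ and $k\ell$ to obtain
\[
\alpha\gamma \equiv 0,\qquad \beta\delta \equiv 0,\qquad \pm b \equiv \alpha\delta + \beta\gamma \pmod p .
\]
The first two relations are what make the argument uniform in $p$: whenever $\alpha\gamma \equiv 0 \pmod p$ one has $\alpha^2+\gamma^2 \equiv (\alpha+\gamma)^2 \pmod p$ (and likewise for $\beta,\delta$), with no need to force one factor to vanish. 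Setting $\ell=0$ in \eqref{E:one} then gives $\ep r \equiv r'(\alpha^2+\gamma^2) \equiv r'(\alpha+\gamma)^2 \pmod p$. Writing $n=\alpha+\gamma$ and recalling $qr \equiv q'r' \equiv -1 \pmod p$, this rearranges to $\ep q' \equiv q\,n^2 \pmod p$; in particular $n$ is a unit, and $qq' \equiv \ep\,(qn)^2 \equiv \pm(qn)^2 \pmod p$, which is exactly the classical criterion for $L(p,q)$ and $L(p,q')$ to be homotopy equivalent.

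Rather than quote the Franz--Reidemeister classification, I would realize this congruence geometrically. Recall from the proof of Lemma \ref{L:gens} that the first-factor projection restricts to a bundle $\pi\colon X_0 \to L$ with a section $s\colon L\to X_0$ (the push-off of the diagonal), and that $s_*[L]$ is exactly the free generator $(1,0)$ of $H_3(X_0)$; the same holds for $X_0'$, $\pi'$, $s'$. I would then study
\[
g \;=\; \pi\circ f\circ s'\colon\; L' \longrightarrow X'_0 \longrightarrow X_0 \longrightarrow L .
\]
On fundamental groups $s'_*(1)=(1,1)$, so by \eqref{E:pi1} $f_*(1,1)=(\alpha+\gamma,\beta+\delta)$ and hence $g_*(1)=\alpha+\gamma=n$, a unit; thus $g_*\colon\pi_1 L'\to\pi_1 L$ is an isomorphism. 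On third homology $s'_*[L']=(1,0)\in H_3(X'_0)$, so by \eqref{E:f*} we get $f_*(1,0)=(\ep,a)$, and since $\pi_*$ annihilates the torsion generator and sends the free generator $s_*[L]$ to $[L]$, I obtain $g_*[L']=\ep\,[L]$, i.e. $\deg g=\ep=\pm1$.

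To conclude, I would lift $g$ to the universal covers. The covering square relating $\pi,\pi'$ shows the lift $\tilde g\colon S^3\to S^3$ has degree $\deg g=\pm1$, hence is a homotopy equivalence; together with the fact that $g_*$ is an isomorphism on $\pi_1$ (and $\pi_n L=\pi_n S^3$ for $n\ge2$), this makes $g$ a weak, and therefore a genuine, homotopy equivalence between $L(p,q')$ and $L(p,q)$, proving the first assertion. For the second, $g$ is a homotopy equivalence of oriented closed $3$-manifolds of degree $\ep$, so it is orientation preserving exactly when $\deg g=+1$, that is, exactly when $\ep=1$.

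The hard part is not any single computation but the bookkeeping that forces the sign $\ep$ — which is a priori only an artifact of the generator choices of Lemma \ref{L:gens} — to coincide with the honest geometric degree of $g$. One must verify carefully that $s_*[L]$ really is the chosen free generator $(1,0)$ (so that $f_*(1,0)=(\ep,a)$ is literally the first column of \eqref{E:f*}) and that $\pi_*$ kills all torsion, so that the two occurrences of $\ep$ are the same sign. A secondary point to keep honest is that the completion of the square $\alpha^2+\gamma^2=(\alpha+\gamma)^2$ is valid for every odd $p$, so the proof never secretly uses primality of $p$ to split into diagonal and anti-diagonal cases.
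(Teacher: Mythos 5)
Your proof is correct, and its first half coincides with the paper's: substituting $k=1,\ \ell=0$ into \eqref{E:one} and \eqref{E:two} yields $2\alpha\gamma \equiv 0 \pmod p$, hence $\ep r \equiv r'(\alpha+\gamma)^2 \pmod p$, and multiplying by $qq'$ gives $\ep q' \equiv q(\alpha+\gamma)^2 \pmod p$. Where you diverge is in what you do with this congruence. The paper stops here and invokes the classical homotopy classification of lens spaces (there is a degree $+1$ homotopy equivalence iff $q'\equiv n^2 q$, and a degree $-1$ one iff $q'\equiv -n^2 q \pmod p$), so its proof is two lines of algebra plus a standard citation. You instead manufacture the homotopy equivalence explicitly as $g=\pi\circ f\circ s'\colon L(p,q')\to L(p,q)$, compute $g_*(1)=\alpha+\gamma$ on $\pi_1$ (an isomorphism, since the congruence forces $\alpha+\gamma$ to be a unit) and $\deg g=\ep$ on $H_3$, and then promote $g$ to a homotopy equivalence by lifting to the universal covers and applying Hopf's and Whitehead's theorems. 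Your route is longer but self-contained, avoids quoting the classification theorem, and yields a sharper conclusion: a homotopy equivalence of degree exactly $\ep$ induced directly by $f$, which makes the orientation statement transparent.

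Two remarks. First, the issue you single out as ``the hard part'' --- whether $s'_*[L']$ is literally the chosen generator $(1,0)$ of Lemma \ref{L:gens} --- is in fact immaterial. In general $s'_*[L']=(1,c')$ for some torsion class $c'$, since the generator $(1,0)$ of Lemma \ref{L:gens} is $s_*[L]$ corrected by a torsion element so that $i_*(1,0)=(1,1,0)$; but then $f_*(1,c')=(\ep,\,a+bc')$ by \eqref{E:f*}, and since $\pi_*$ annihilates all of the torsion of $H_3(X_0)$ (its target $H_3(L)=\Z$ is torsion-free) while $\pi_*(1,0)=\pi_*s_*[L]=[L]$, one still gets $g_*[L']=\ep\,[L]$. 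So no delicate bookkeeping is needed there. Second, your derivation of $\alpha\gamma\equiv\beta\delta\equiv 0 \pmod p$ by comparing coefficients divides by $2$ and therefore needs $p$ odd; note that the substitution $k=1,\ \ell=0$ gives $2\alpha\gamma\equiv 0 \pmod p$ directly, and this weaker statement is all that the identity $(\alpha+\gamma)^2\equiv\alpha^2+\gamma^2 \pmod p$ requires, so the argument (like the paper's) actually works for every $p$, odd or even.
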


\begin{proof}
Set $k = 1$ and $\ell = 0$ in equations \eqref{E:one} and \eqref{E:two} to obtain $-\ep r = -r' (\alpha^2 + \gamma^2)\pmod p$ and $2\alpha\gamma = 0 \pmod p$ and consequently $-\ep r = -r' (\alpha + \gamma)^2\pmod p$. Multiplying the latter equation by $qq'$ and keeping in mind that $qr = -1\pmod p$ and $q'r'=-1\pmod p$ we conclude that
\[
\ep q' = q (\alpha + \gamma)^2\pmod p.
\]
Therefore, $L(p,q)$ and $L(p,q')$ are homotopy equivalent via a homotopy equivalence which is orientation preserving if and only if $\ep = 1$.
\end{proof}

\begin{proposition}
Assume that $p$ is an odd prime, and that the deleted squares $X_0$ and $X'_0$ of lens spaces $L(p,q)$ and $L(p,q')$ are homotopy equivalent via a homotopy equivalence $f: X'_0 \to X_0$. Then 
the matrix \eqref{E:pi1} of the induced homomorphism $f_*: \pi_1 (X'_0) \to \pi_1 (X_0)$ with respect to the canonical bases of the fundamental groups is of the form 
\medskip
\[
\begin{pmatrix} \alpha & 0 \\ 0 & \pm\,\alpha \end{pmatrix}\;\;\text{or}\;\;
\begin{pmatrix} 0 & \alpha \\ \pm\,\alpha & 0 \end{pmatrix}\quad\text{with}\quad \ep q'\, =\, q\alpha^2\pmod p,
\]

\medskip\noindent
and the matrix \eqref{E:f*} of the induced homomorphism $f_*: H_3 (X'_0) \to H_3 (X_0)$ is of the form 
\[
\begin{pmatrix} \ep & 0 \\ 0 & \pm\,\alpha^2 \end{pmatrix}.
\]
\end{proposition}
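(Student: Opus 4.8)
The plan is to extract from the two constraint equations \eqref{E:one} and \eqref{E:two} enough arithmetic to pin down all four entries of the matrix \eqref{E:pi1}. Both equations are asserted to hold for every pair of integers $k,\ell$, so each side is a homogeneous quadratic in $k,\ell$ whose values agree on all of $\Z/p$. Because $p$ is an odd prime, $\Z/p$ is a field and each quadratic has degree $2 < p$ in each variable, so I may compare coefficients of $k^2$, $k\ell$, and $\ell^2$ directly; equivalently, I would substitute $(k,\ell)=(1,0)$, $(0,1)$, and $(1,1)$. The oddness of $p$ also makes the factor $2$ invertible, which I will use repeatedly.

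First I would work with \eqref{E:two}. Expanding its right-hand side as $2\alpha\gamma\,k^2 + 2(\alpha\delta+\beta\gamma)\,k\ell + 2\beta\delta\,\ell^2$ and matching it with $\pm 2b\,k\ell$ yields $\alpha\gamma = 0$, $\beta\delta = 0$, and $\alpha\delta + \beta\gamma = \pm b \pmod p$. Since $\Z/p$ is a field, $\alpha\gamma=0$ forces $\alpha = 0$ or $\gamma = 0$, and likewise $\beta\delta=0$ forces $\beta = 0$ or $\delta = 0$. Of the four resulting combinations, two make the determinant of \eqref{E:pi1} vanish and are excluded because $f_*$ lies in $GL(2,\Z/p)$; the surviving two are the diagonal case $\beta = \gamma = 0$ and the anti-diagonal case $\alpha = \delta = 0$. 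In either case the $k\ell$-relation reads $b = \pm\alpha\delta$ or $b = \pm\beta\gamma$.

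Next I would feed each surviving case into \eqref{E:one}. Comparing the $k\ell$ coefficients gives $\pm 2a = -2r'(\alpha\beta+\gamma\delta) = 0$, hence $a = 0$, which is the off-diagonal entry of \eqref{E:f*}. Comparing the $k^2$ and $\ell^2$ coefficients gives $\ep r = r'\alpha^2 = r'\delta^2$ in the diagonal case and $\ep r = r'\gamma^2 = r'\beta^2$ in the anti-diagonal case. As $r'$ is a unit (from $q'r' = -1 \pmod p$), I conclude $\delta = \pm\alpha$ (respectively $\beta = \pm\gamma$), which is exactly the claimed shape of \eqref{E:pi1} after renaming the surviving free entry $\alpha$. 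Substituting $r \equiv -q^{-1}$ and $r' \equiv -(q')^{-1} \pmod p$ into $\ep r = r'\alpha^2$ and clearing denominators produces $\ep q' = q\alpha^2 \pmod p$, consistent with the relation $\ep q' = q(\alpha+\gamma)^2$ already obtained in the preceding proposition (where $\gamma=0$, resp. $\alpha=0$, after relabeling). Finally, combining $b = \pm\alpha^2$, with the two independent signs collapsing into one, with $a = 0$ gives the homology matrix $\diag(\ep,\pm\alpha^2)$ of \eqref{E:f*}.

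I do not expect a genuine obstacle here: the content is a careful bookkeeping of a pair of quadratic identities over the field $\Z/p$. The two points that require attention are the justification for comparing coefficients — which is where the hypothesis that $p$ is an odd prime is essential, both to have a field and to keep $2$ invertible — and the use of the invertibility of $f_*$ to discard the two degenerate sign combinations. The only remaining care needed is tracking the several independent $\pm$ choices so that they consolidate correctly into the single signs appearing in the statement.
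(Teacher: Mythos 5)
Your proposal is correct and follows essentially the same route as the paper's proof: both extract $\alpha\gamma = 0$ and $\beta\delta = 0$ from \eqref{E:two}, use invertibility of the matrix over the field $\Z/p$ to reduce to the diagonal and anti-diagonal cases, then use \eqref{E:one} to get $\delta = \pm\alpha$ (resp. $\beta = \pm\gamma$), $a = 0$, and $b = \pm\alpha^2$; your coefficient comparison is just the coordinate-free version of the paper's substitutions $(k,\ell) = (1,0), (0,1), (1,1), (1,-1)$. The only cosmetic difference is that you re-derive $\ep q' = q\alpha^2$ directly from $\ep r = r'\alpha^2$, whereas the paper inherits it from the preceding proposition.
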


\begin{proof}
Setting first $k = 1$, $\ell = 0$ and then $k = 0$, $\ell = 1$ in equation \eqref{E:two} results in equations $\alpha\gamma = 0\pmod p$ and $\beta\delta = 0\pmod p$. Since the determinant $\alpha\delta - \beta\gamma$ of the matrix \eqref{E:pi1} is a unit in $\Z/p$, we readily conclude that there are only two options for the matrix \eqref{E:pi1}, one with $\alpha =\delta = 0$ and the other with $\beta = \gamma = 0$. 

Let us first consider the case of $\beta =\gamma = 0$. Plugging these values in equation \eqref{E:one} and evaluating at $k = 1$, $\ell = 0$ and then at $k = 0$, $\ell = 1$ results in two equations, $\ep r = r'\alpha^2 \pmod p$ and $\ep r = r'\delta^2 \pmod p$. These imply that $\alpha^2 = \delta^2\pmod p$ and therefore $\delta = \pm \alpha$. That $b = \pm \alpha^2$ now follows by plugging $\beta =\gamma = 0$ in equations \eqref{E:two}. The case of $\alpha =\delta = 0$ is treated similarly.

That $a = 0$ follows by setting first $k = 1$, $\ell = 1$, and then $k = 1$, $\ell = -1$, in equation \eqref{E:one}. 
\end{proof}

\begin{remark}\label{R:diag}
By composing the homotopy equivalence $f: X'_0 \to X_0$ with the homeomorphism $g: X_0 \to X_0$ given by $g(x,y) = (y,x)$ if necessary, one may assume that the induced homomorphism $f_*: \pi_1 (X'_0) \to \pi_1 (X_0)$ is given by the matrix
\begin{equation}\label{E:diag}
\begin{pmatrix} \alpha & 0 \\ 0 & \pm\,\alpha \end{pmatrix}.
\end{equation}

\medskip\noindent
That the sign in this matrix is actually plus, as we claimed in the theorem of the introduction, will follow from an argument in the next section.
\end{remark}

\begin{remark}
The techniques of this section could also be applied to homotopy equivalences between the squares of lens spaces, with the diagonal left intact. One can easily see that these techniques produce no new algebraic restrictions, which should not be surprising since the squares of any two lens spaces $L(p,q)$ and $L(p,q')$ are known to be not just homotopy equivalent but diffeomorphic to each other; see Kwasik--Schultz \cite{KS}.
\end{remark}


\section{Universal covers of deleted squares}\label{S:covers}
We wish next to combine our results with those of Longoni and Salvatore \cite{SL} to obtain further obstructions to the existence of homotopy equivalences $f: X'_0 \to X_0$ of deleted squares. The approach taken in \cite{SL} was to lift $f$ to a homotopy equivalence $\tilde f: \tilde X'_0 \to \tilde X_0$ of the universal covering spaces and study its effect on the triple Massey products in cohomology.  We begin in this section by computing the cohomology ring $H^*(\tilde X_0)$ of a deleted square together with its natural $\Z\,[\pi_1 (X_0)]$ module structure.

To describe the universal covering space $\tilde X_0$, we will follow \cite{SL} by first observing that the universal covering space of the square $X = L \times L$ of a lens space $L = L(p,q)$ is the product $S^3 \times S^3$ on which $(m,n) \in \pi_1 (X) = \Z/p\,\oplus\,\Z/p$ acts by the formula 
\[
\tau_{m,n}\, ((z_1,z_2),(z_3,z_4))\, =\, ((\zeta^m z_1,\zeta^{qm} z_2),(\zeta^{n} z_3,\zeta^{qn} z_4)),\quad \zeta = e^{2\pi i/p}.
\]
The orbit of the diagonal $\Delta \subset S^3 \times S^3$ under this action consists of the disjoint 3-spheres 
\[
\Delta_k = \{((z_1,z_2),(z_3,z_4))\; |\; (z_1,z_2) = (\zeta^k z_3,\zeta^{qk} z_4) \},\quad k \in \Z/p, 
\]
embedded into $S^3 \times S^3$. Their removal from $S^3 \times S^3$ results in the so called orbit configuration space 
\[
\tilde{X_0 }=\{((z_1,z_2),(z_3,z_4))\;|\;(z_1,z_2)\neq(\zeta^k z_3,\zeta^{qk} z_4)\;\;\text{for any}\; k\in \Z/p \},
\]
which is the universal covering space of $X_0$. In short, 
\[
\tilde X_0\; =\; (S^3 \times S^3)\; \setminus\; \left(\;\bigsqcup\; \Delta_k\right).
\]

\begin{lemma}\label{L:cup}
The only non-trivial reduced cohomology groups of $\tilde X_0$ are $H^2 (\tilde X_0) = \Z^{\,p-1}$, $H^3 (\tilde  X_0) = \Z$ and $H^5 (\tilde X_0) = \Z^{\,p-1}$. The cup-product with a generator of $H^3 (\tilde X_0)$ provides an isomorphism $H^2 (\tilde X_0) \to H^5 (\tilde X_0)$.
 \end{lemma}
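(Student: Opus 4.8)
The plan is to realize $\tilde X_0$ as the total space of a fibre bundle over $S^3$ and to run the cohomology Serre spectral sequence. Projecting onto the first $S^3$ factor, $\pi\colon\tilde X_0\to S^3$, $((z_1,z_2),(z_3,z_4))\mapsto(z_1,z_2)$, the fibre over $(z_1,z_2)$ is the set of $(z_3,z_4)\in S^3$ with $(z_3,z_4)\neq(\zeta^{-k}z_1,\zeta^{-qk}z_2)$ for all $k$. Since the $\Z/p$--action is free, these $p$ points are distinct, so the fibre is $S^3$ with $p$ distinct points removed; as they vary smoothly with the base point, local triviality follows from the isotopy extension theorem. Because $S^3$ minus $p$ points is homotopy equivalent to $\bigvee_{p-1}S^2$, the fibre $F$ has $H^0(F)=\Z$, $H^2(F)=\Z^{\,p-1}$, and all other reduced groups zero.

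As the base $S^3$ is simply connected the coefficient system is trivial, so $E_2^{s,t}=H^s(S^3)\otimes H^t(F)$ is concentrated in the four bidegrees $(0,0),(3,0),(0,2),(3,2)$ with values $\Z,\Z,\Z^{\,p-1},\Z^{\,p-1}$. For dimension reasons the only possibly nonzero differential is $d_3\colon E_3^{0,2}\to E_3^{3,0}$. The key step is to show $d_3=0$, and for this I would exhibit an explicit section of $\pi$, namely $s(z_1,z_2)=((z_1,z_2),(e^{-i\pi/p}z_1,e^{-i\pi/p}z_2))$; since $e^{i\pi/p}$ is a primitive $2p$-th root of unity and hence not a $p$-th root of unity, $s(z_1,z_2)$ avoids all the deleted points and $s$ indeed lands in $\tilde X_0$. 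A section forces $\pi^*\colon H^*(S^3)\to H^*(\tilde X_0)$ to be injective, so the edge homomorphism $E_2^{3,0}\to E_\infty^{3,0}$ is an isomorphism and therefore $d_3=0$. Thus $E_2=E_\infty$, and reading off the total degrees gives $H^2(\tilde X_0)=E_\infty^{0,2}=\Z^{\,p-1}$, $H^3(\tilde X_0)=E_\infty^{3,0}=\Z$, and $H^5(\tilde X_0)=E_\infty^{3,2}=\Z^{\,p-1}$, with all other reduced groups zero.

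For the cup-product statement I would invoke the multiplicativity of the spectral sequence. A generator $u$ of $H^3(\tilde X_0)$ is $\pi^*$ of the fundamental class of $S^3$, so it corresponds to $\beta\otimes 1\in E_\infty^{3,0}$, where $\beta$ generates $H^3(S^3)$. In the tensor-product ring $E_\infty=H^*(S^3)\otimes H^*(F)$, multiplication by $\beta\otimes 1$ sends $1\otimes x\in E_\infty^{0,2}$ to $\beta\otimes x\in E_\infty^{3,2}$, which is an isomorphism $E_\infty^{0,2}\to E_\infty^{3,2}$. In total degrees $2$ and $5$ the filtration on $H^*(\tilde X_0)$ is concentrated in the single bidegrees $(0,2)$ and $(3,2)$ respectively, so the induced map on associated graded objects coincides with the genuine cup product $u\cup(-)\colon H^2(\tilde X_0)\to H^5(\tilde X_0)$, which is therefore an isomorphism.

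The main obstacle is the vanishing of $d_3$: a priori this map $\Z^{\,p-1}\to\Z$ could be nonzero, which would shrink both $H^2$ and $H^3$ and destroy the stated answer, so producing the explicit section is the crux of the argument. The remaining work is bookkeeping — checking that the $E_\infty$ product genuinely computes the cup product — and this is clean precisely because the relevant filtrations each have a single nonzero graded piece.
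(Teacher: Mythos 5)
Your proof is correct and follows essentially the same route as the paper: the Leray--Serre spectral sequence of the projection $\tilde X_0 \to S^3$ with fibre a $p$-punctured $S^3 \simeq \bigvee_{p-1} S^2$, collapse forced by a section, and multiplicativity plus the absence of extension problems giving the cup-product isomorphism $H^2 \to H^5$. The only difference is that the paper merely asserts the existence of a section while you exhibit one explicitly, $s(z_1,z_2)=((z_1,z_2),(e^{-i\pi/p}z_1,e^{-i\pi/p}z_2))$, which is a worthwhile addition since $e^{-i\pi/p}$ is not a $p$-th root of unity and hence the image genuinely avoids every $\Delta_k$.
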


\begin{proof}
This follows from the Leray--Serre spectral sequence of the bundle $\tilde X_0 \to S^3$ obtained by projecting $\tilde X_0 \subset S^3 \times S^3$ onto the first factor. The fiber of this bundle is the $3$-sphere with $p$ punctures, which is homotopy equivalent to a one-point union of $p-1$ copies of $S^2$. The bundle admits a section forcing the spectral sequence to collapse. Since there is no extension problem involved, the cohomology ring $H^*(\tilde X_0)$ splits as a tensor product. 
\end{proof}

Longoni and Salvatore \cite{SL} provided an explicit set of generators in $H^2 (\tilde X_0)$ using the Poincar\'e duality isomorphism 
\medskip
\[
H^2 \left(S^3 \times S^3 \setminus \left(\;\bigsqcup\; \Delta_k\right)\right) = H_4 \left(S^3\times S^3,\, \bigsqcup\;\Delta_k\right).
\]

\medskip\noindent
The generators $a_0,\ldots,a_{p-1}$ are defined as the Poincar\'e duals of the sub-manifolds 
\[
A_k = \{((z_1,z_2),(\zeta^s z_1,\zeta^{qs} z_2))\in S^3\times S^3\;|\; s \in [k-1, k]\,\}\;\subset\;S^3 \times S^3,
\]
and the only relation they satisfy is $a_0 + a_1 + \ldots + a_{p-1} = 0$. 

\begin{lemma}\label{L:diag}
The fundamental group $\pi_1 (X_0) = \Z/p\,\oplus\,\Z/p$ acts on the generators $a_k$ by the rule 
$\tau^*_{m,n}\,(a_k) = a_{k + m - n}$, where the indices are computed mod $p$.
\end{lemma}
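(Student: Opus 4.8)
The plan is to exploit the fact that each deck transformation $\tau_{m,n}$ is an orientation-preserving diffeomorphism of $S^3 \times S^3$ which permutes the spheres $\Delta_k$ among themselves, and hence restricts to an orientation-preserving self-diffeomorphism of $\tilde X_0$ compatible with the Poincar\'e--Lefschetz duality isomorphism $H^2(\tilde X_0) = H_4(S^3 \times S^3, \bigsqcup \Delta_k)$ used to define the classes $a_k$. Under this identification $a_k$ is the dual of the relative $4$-cycle $A_k$, so computing $\tau^*_{m,n}(a_k)$ reduces, by naturality of duality, to locating the image of $A_k$ under the appropriate power of $\tau_{m,n}$.

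First I would record the precise form of naturality: for an orientation-preserving diffeomorphism $\tau$ of a manifold pair, the square relating $\tau^*$ on cohomology and $(\tau^{-1})_*$ on the dual homology commutes, so that $\PD(\tau^*_{m,n}(a_k)) = (\tau_{m,n}^{-1})_*[A_k] = [\tau_{m,n}^{-1}(A_k)]$. This converts the cohomological question into the purely geometric one of identifying $\tau_{m,n}^{-1}(A_k)$ among the submanifolds $A_j$.

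Next I would carry out the direct computation. Writing a point of $A_k$ as $((z_1,z_2),(\zeta^s z_1, \zeta^{qs} z_2))$ with $s \in [k-1,k]$ and applying $\tau_{m,n}$, I would set $w_1 = \zeta^m z_1$ and $w_2 = \zeta^{qm} z_2$ and observe that the second $S^3$-coordinate becomes $(\zeta^{s+n-m} w_1, \zeta^{q(s+n-m)} w_2)$. As $s$ runs over $[k-1,k]$ the new parameter $s' = s+n-m$ runs over $[(k+n-m)-1,\, k+n-m]$, so $\tau_{m,n}(A_k) = A_{k+n-m}$ as oriented submanifolds; since the map is a coordinatewise multiplication by roots of unity it respects the parametrization by $(z_1,z_2,s)$ and introduces no sign. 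Inverting gives $\tau_{m,n}^{-1}(A_k) = A_{k+m-n}$, and combining with the naturality step yields $\tau^*_{m,n}(a_k) = a_{k+m-n}$ with indices taken mod $p$.

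The main obstacle is bookkeeping rather than conceptual: one must keep straight the direction of the action (pullback on cohomology versus pushforward on the dual homology, hence the appearance of $\tau^{-1}$ and the sign of the index shift) and verify that $\tau_{m,n}(A_k) = A_{k+n-m}$ holds with matching orientations, so that the coefficient is $+1$ rather than $-1$. A useful consistency check is that the resulting assignment $(m,n) \mapsto (a_k \mapsto a_{k+m-n})$ really is a group action; this holds because $\tau_{m,n} \circ \tau_{m',n'} = \tau_{m+m',\,n+n'}$ and the index shift $m-n$ is additive in $(m,n)$.
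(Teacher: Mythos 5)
Your proof is correct and follows essentially the same route as the paper: both arguments combine naturality of the Poincar\'e duality isomorphism $H^2(\tilde X_0) \cong H_4\bigl(S^3\times S^3,\,\bigsqcup \Delta_k\bigr)$ (with the pullback on cohomology corresponding to the pushforward by $\tau_{m,n}^{-1} = \tau_{-m,-n}$ on homology) with the identification $\tau_{m,n}(A_k) = A_{k+n-m}$. The only difference is that the paper cites Miller for this last formula, whereas you verify it by the direct substitution $s' = s+n-m$, which is a welcome self-contained touch.
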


\begin{proof}
The action of $\pi_1 (X_0)$ on the sub-manifolds $A_k$ was computed in Miller \cite[Section 2.1]{miller} to be $\tau_{m,n}\,(A_k) = A_{k+n-m}$. Passing to the Poincar\'e duals $a_k$ in the commutative diagram 
\medskip
\[
\begin{CD}
H^2(\tilde X_0) @> \tau^*_{m,n} >> H^2(\tilde X_0)\\
@V \PD VV @V \PD VV\\
H_4\left(S^3\times S^3,\;\bigsqcup\,\Delta_k\right)  @< \tau_{m,n} << H_4\left(S^3\times S^3,\;\bigsqcup\,\Delta_k\right),
\end{CD}
\]

\bigskip\noindent
where $\PD$ stands for the Poincar\'e duality isomorphism, and using the fact that the inverse of $\tau_{m,n}$ is $\tau_{-m,-n}$ gives the desired formula.
\end{proof}

We now wish to describe the behavior with respect to the above action of the homomorphism $\tilde f^*$ induced on the second cohomology of the universal covers by a homotopy equivalence $f: X'_0 \to X_0$.

\begin{proposition}\label{P:plus}
Let $f: X_0' \to X_0$ be a homotopy equivalence such that the induced homomorphism $f_*: \pi_1 (X'_0) \to \pi_1 (X_0)$ is given by the matrix \eqref{E:diag} with $\ep q' = q\alpha^2\pmod p$. Then the sign in the matrix \eqref{E:diag} is a plus and the following diagram commutes
\[
\begin{CD}
H^2 (\tilde X_0) @> \tilde f^* >> H^2 (\tilde X'_0) \\
@V \tau^*_{\alpha m,\alpha n} VV @VV \tau^*_{m,n} V \\
H^2 (\tilde X_0)  @> \tilde f^* >> H^2 (\tilde X'_0).
\end{CD}
\]
\end{proposition}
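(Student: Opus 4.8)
The plan is to deduce both assertions from a single source: the equivariance of the lift $\tilde f$ together with the fact that $\tilde f^*$ is an isomorphism. Since $f$ is a homotopy equivalence, it lifts to a homotopy equivalence $\tilde f\colon \tilde X'_0 \to \tilde X_0$ of universal covers, and any such lift intertwines the deck actions through $f_*$: for every $g \in \pi_1(X'_0)$, viewed as a deck transformation of $\tilde X'_0$, one has $\tilde f \circ g = f_*(g)\circ \tilde f$, where $f_*(g)$ denotes the corresponding deck transformation of $\tilde X_0$. This holds for \emph{any} choice of lift, because the deck group $\Z/p\,\oplus\,\Z/p$ is abelian, so replacing $\tilde f$ by a deck transformation of $\tilde X_0$ does not disturb the relation. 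Passing to cohomology and using contravariance turns this into the identity
\[
g^*\circ \tilde f^* \;=\; \tilde f^*\circ f_*(g)^*
\]
of homomorphisms $H^2(\tilde X_0) \to H^2(\tilde X'_0)$.

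Write the diagonal matrix \eqref{E:diag} as $f_* = \diag(\alpha,\eta\alpha)$ with $\eta = \pm 1$ the sign to be determined, so that $f_*(m,n) = (\alpha m,\eta\alpha n)$ and the displayed relation reads $\tau^*_{m,n}\circ \tilde f^* = \tilde f^*\circ \tau^*_{\alpha m,\eta\alpha n}$. The heart of the matter, and the step I expect to be the main obstacle, is to force $\eta = +1$. I would do this by evaluating the relation on the diagonal element $g = (1,1)$. On the source, Lemma \ref{L:diag} applied to $X'_0$ gives $\tau^*_{1,1}(a_k) = a_{k+1-1} = a_k$, so $\tau^*_{1,1} = \Id$; the relation therefore collapses to $\tilde f^* = \tilde f^*\circ \tau^*_{\alpha,\eta\alpha}$. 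Since $\tilde f^*$ is an isomorphism, this yields $\tau^*_{\alpha,\eta\alpha} = \Id$ on $H^2(\tilde X_0)$. But Lemma \ref{L:diag} computes this self-map as $a_k \mapsto a_{k+\alpha(1-\eta)}$, so being the identity forces $\alpha(1-\eta)\equiv 0 \pmod p$. As $\alpha$ is a unit in $\Z/p$ and $p$ is an odd prime, the choice $\eta = -1$ would give $1-\eta = 2\not\equiv 0\pmod p$, a contradiction; hence $\eta = +1$, which is the first claim.

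With the sign pinned down, $f_*(m,n) = (\alpha m,\alpha n)$, and the equivariance relation becomes exactly $\tau^*_{m,n}\circ \tilde f^* = \tilde f^*\circ \tau^*_{\alpha m,\alpha n}$, which is precisely the commutativity of the asserted square. The only points requiring care are the bookkeeping of the pullback direction, so that the vertical arrows land as drawn, and the invocation of Lemma \ref{L:diag} for $X'_0$; the latter is legitimate because the index-shift action of $\tau_{m,n}$ on the submanifolds $A_k$, and hence on the classes $a_k$, is independent of the parameter $q$. Note that the standing hypothesis $\ep q' = q\alpha^2\pmod p$ plays no role in pinning the sign; it is simply carried along from the previous proposition.
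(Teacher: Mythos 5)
Your proof is correct and is essentially the paper's argument: the paper likewise uses the fact that the lift $\tilde f$ intertwines the deck actions through $f_*$ (phrased there as the commutativity of a diagram involving $\Ad(\tilde f^*)$), and then invokes Lemma \ref{L:diag} to identify the kernels of the actions $\tau$, $\tau'$ with the diagonal subgroups, so that $f_*$ must carry diagonal to diagonal, forcing the plus sign. Your evaluation of the equivariance relation at the element $(1,1)$ is just a hands-on rendering of that same kernel-preservation step, with the helpful extra observations that the relation is lift-independent (abelian deck group) and that the index shift is trivial only when the shift is $0 \bmod p$.
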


\smallskip

\begin {proof} 
The fundamental group acts by deck transformations on the universal cover giving rise to the commutative diagram
\medskip
\[
\begin{CD}
\pi_1(X'_0) @>\tau' >> \Aut\,(H^2(\tilde X'_0))\\
@Vf_*VV @VV \Ad\,(\tilde f^*)V \\
\pi_1(X_0)  @>\tau >> \Aut\,(H^2(\tilde X_0))
\end{CD}
\]

\bigskip\noindent
where $\tau(m,n) = \tau_{m,n}$ and similarly for $\tau'$. The commutativity of this diagram implies, in particular, that $f_*(\ker \tau') = \ker \tau$. However, according to Lemma \ref{L:diag}, the kernels of both $\tau$ and $\tau'$ are the diagonal subgroups of $\Z/p\,\oplus\,\Z/p$, therefore, the matrix \eqref{E:diag} must be diagonal. 
\end {proof}

According to Lemma \ref{L:diag}, the diagonal subgroup $\Z/p\subset \Z/p\,\oplus\,\Z/p$ of $\pi_1(X_0)$ acts trivially on $H^2 (\tilde X_0)$ thereby giving rise to an effective action of the quotient group $\Z/p$ on $H^2 (\tilde X_0)$. Let $t = (1,0) \in \Z/p\,\oplus\,\Z/p$ be a generator of this group, and consider the cyclotomic ring $\Z[t]/(N)$, where $N = 1+t+\ldots+t^{p-1}$.

\begin{corollary}
The homomorphism $\phi: H^2 (\tilde X_0) \to \Z[t]/(N)$ of abelian groups defined on the generators by the formula $\phi(a_{k+1}) = t^k$ is an isomorphism of $\Z[t]$--modules. 
\end{corollary}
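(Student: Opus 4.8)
The plan is to exhibit $\phi$ as a bijection between natural $\Z$-bases and then check that it intertwines the two $\Z[t]$-actions. First I would record that $\Z[t]/(N)$ is free abelian of rank $p-1$: since $N = 1 + t + \cdots + t^{p-1}$ is monic of degree $p-1$, the classes $1, t, \ldots, t^{p-2}$ form a $\Z$-basis, subject to the inherited relation $t^{p-1} = -(1 + t + \cdots + t^{p-2})$. On the other side, Lemma \ref{L:cup} together with the description of the Longoni--Salvatore generators tells us that $H^2(\tilde X_0)$ is the free abelian group on $a_0, \ldots, a_{p-1}$ modulo the single relation $a_0 + a_1 + \cdots + a_{p-1} = 0$; solving for $a_0$ shows that $a_1, \ldots, a_{p-1}$ form a $\Z$-basis.

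Next I would verify that $\phi$ is well defined and an isomorphism of abelian groups. Well-definedness amounts to checking that the defining relation is respected: under the rule $\phi(a_{k+1}) = t^k$ one has $\phi(a_0) = t^{p-1}$ and $\phi(a_j) = t^{j-1}$ for $1 \le j \le p-1$, so the relation $\sum_k a_k = 0$ maps to $t^{p-1} + (1 + t + \cdots + t^{p-2}) = N = 0$, which is exactly the relation holding in $\Z[t]/(N)$. Moreover $\phi$ carries the basis $a_1, \ldots, a_{p-1}$ of $H^2(\tilde X_0)$ onto the basis $1, t, \ldots, t^{p-2}$ of $\Z[t]/(N)$; being a bijection between $\Z$-bases, it is an isomorphism of abelian groups.

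Finally I would confirm $\Z[t]$-equivariance. The module structure on $H^2(\tilde X_0)$ is given by the generator $t = (1,0)$ of the effective quotient action, which by Lemma \ref{L:diag} sends $a_k \mapsto \tau^*_{1,0}(a_k) = a_{k+1}$, while on $\Z[t]/(N)$ the element $t$ acts by multiplication. For $0 \le k \le p-2$ the identity $\phi(t\cdot a_{k+1}) = \phi(a_{k+2}) = t^{k+1} = t\cdot\phi(a_{k+1})$ is immediate. The only point requiring a moment's care is the wrap-around $t\cdot a_0 = a_1$, where one must check that $t\cdot\phi(a_0) = t\cdot t^{p-1} = t^p$ equals $\phi(a_1) = 1$; this holds because $N = 0$ forces $t^p - 1 = (t-1)N = 0$ in $\Z[t]/(N)$. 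Hence $\phi$ respects the action of $t$ and so of all of $\Z[t]$, completing the proof. The argument is essentially formal; the only non-bookkeeping step is recognizing that the single cyclotomic relation $N = 0$ simultaneously encodes the homological relation $\sum_k a_k = 0$ and the periodicity $t^p = 1$ needed for equivariance at the boundary index.
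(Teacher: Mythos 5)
Your proof is correct and follows exactly the route the paper intends: the corollary is stated there without proof, as an immediate consequence of the Longoni--Salvatore presentation of $H^2(\tilde X_0)$ (generators $a_0,\ldots,a_{p-1}$ with sole relation $a_0+\cdots+a_{p-1}=0$) and the action formula of Lemma \ref{L:diag}, which is precisely what you verify. Your explicit checks --- that the relation maps to $N=0$, that bases correspond, and that the wrap-around equivariance reduces to $t^p=1$ via $(t-1)N=t^p-1$ --- are the routine details the paper leaves to the reader.
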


Using this module structure, the result of Proposition \ref{P:plus} can be re-stated as follows.

\begin{corollary}\label{C:heq}
Any homotopy equivalence $f: X'_0 \to X_0$ induces an isomorphism $\tilde f^*: H^2 (\tilde X_0) \to H^2 (\tilde X'_0)$ which makes the following diagram commute
\medskip
\[
\begin{CD}
H^2 (\tilde X_0) @> \tilde f^* >> H^2 (\tilde X'_0) \\
@V t^{\alpha} VV @VV t V \\
H^2 (\tilde X_0) @> \tilde f^* >> H^2 (\tilde X'_0).
\end{CD}
\]
\end{corollary}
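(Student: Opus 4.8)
The plan is to read off this corollary from Proposition \ref{P:plus} by translating the deck-transformation action into the cyclotomic module structure supplied by the preceding corollary. First I would invoke Remark \ref{R:diag} to replace $f$, if necessary, by its composition with the swap homeomorphism $g(x,y) = (y,x)$, so that the induced map $f_*\colon \pi_1(X'_0)\to\pi_1(X_0)$ is represented by the diagonal matrix \eqref{E:diag}. Proposition \ref{P:plus} then guarantees both that the sign in \eqref{E:diag} is a plus, i.e. $f_* = \diag(\alpha,\alpha)$, and that the square relating the isomorphism $\tilde f^*$ to $\tau^*_{\alpha m,\alpha n}$ and $\tau^*_{m,n}$ commutes for every $(m,n) \in \Z/p\,\oplus\,\Z/p$. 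That $\tilde f^*$ is an isomorphism is automatic, since a homotopy equivalence $f$ lifts to a homotopy equivalence $\tilde f$ of universal covers.

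Next I would specialize that commutative square to $(m,n)=(1,0)$, which yields the single intertwining relation $\tilde f^*\circ\tau^*_{\alpha,0} = \tau^*_{1,0}\circ\tilde f^*$. It then remains only to rename the two vertical maps in terms of the module structure. On $H^2(\tilde X_0)$ the effective $\Z/p$--action is generated by $t=(1,0)$, acting as $\tau^*_{1,0}$; since $(m,n)\mapsto\tau^*_{m,n}$ is a homomorphism on the abelian group $\pi_1(X_0)$, the operator $\tau^*_{\alpha,0}=(\tau^*_{1,0})^{\alpha}$ is precisely multiplication by $t^{\alpha}$ under the isomorphism $\phi$ of the preceding corollary, and likewise $\tau^*_{1,0}$ on $H^2(\tilde X'_0)$ is multiplication by $t$. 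Substituting these identifications turns the intertwining relation into the commuting square displayed in the statement.

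I do not expect a genuine obstacle here, as the corollary is essentially a change of notation; the one point requiring care is the bookkeeping of which generator plays the role of $t$ on each of the two covers, together with the verification---via Lemma \ref{L:diag} and the homomorphism property of the action---that $\tau^*_{\alpha,0}$ corresponds to $t^{\alpha}$ rather than to some other power. It is worth flagging explicitly that, because $\alpha\ne 1$ in general, $\tilde f^*$ is \emph{not} a map of $\Z[t]$--modules: the displayed square records only the weaker, twisted equivariance $\tilde f^*\circ t^{\alpha} = t\circ\tilde f^*$. It is exactly this twisted form, encoding the arithmetic constraint $\ep q'=q\alpha^2\pmod p$ through the exponent $\alpha$, that will feed into the Massey-product comparison on the universal covers in the sequel.
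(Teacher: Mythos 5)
Your proposal is correct and matches the paper's own (essentially one-line) argument: the paper treats Corollary~\ref{C:heq} as a direct restatement of Proposition~\ref{P:plus} via the module isomorphism $\phi$, exactly as you do by specializing the equivariance square to $(m,n)=(1,0)$ and identifying $\tau^*_{1,0}$ with multiplication by $t$ (and $\tau^*_{\alpha,0}$ with $t^{\alpha}$). Your explicit bookkeeping, including the appeal to Remark~\ref{R:diag} and the observation that $\tilde f^*$ is only twisted-equivariant rather than $\Z[t]$--linear, is a faithful and slightly more careful rendering of the same proof.
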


\bigskip


\section{Massey products}\label{S:massey}
In this section, we will combine our results with the Massey product  techniques of \cite{SL} to obtain further restrictions on possible homotopy equivalences between deleted squares of lens spaces. 


\subsection{Definition}
Let $L = L(p,q)$ be a lens space and $\tilde X_0$ the universal covering space of its deleted square $X_0$. It follows from Lemma \ref{L:cup} that for any cohomology classes $x$, $y$, $z \in H^2 (\tilde X_0)$ their pairwise cup-products vanish, giving rise to the Massey products $\langle x, y, z\rangle$. The precise definition is as follows. Choose singular cochains $\bar x$, $\bar y$ and $\bar z$ representing the cohomology classes $x$, $y$ and $z$, respectively.  Since $x\,\cup\,y = y\,\cup\,z = 0$ there exist singular cochains $Z$ and $X$ such that $dZ = \bar x\,\cup\,\bar y$ and $dX = \bar y\,\cup\,\bar z$, and we define $\langle x,y,z \rangle$ to be the cohomology class of the cocycle $Z\,\cup\,\bar z - \bar x\,\cup\,X$. Since the choice of $Z$ and $X$ is not unique, $\langle x, y, z\rangle$ is only well defined as a class in $H^*(\tilde X_0)/\langle x, z\rangle$, where $\langle x, z\rangle$ is the ideal generated by $x$ and $z$. The class $\langle x, y, z\rangle$ obviously has degree $5$. 


\subsection{Calculations}
From now on, we will work with $\Z/2$ coefficients. Assuming that $p$ is an odd prime, we will use results of Section \ref{S:covers} to identify both $H^2 (\tilde X_0;\Z/2)$ and $H^5 (\tilde X_0;\Z/2)$ with the additive group of the cyclotomic field
\[
R\; =\; (\Z/2)\,[t]\,/\,(1 + t + \ldots + t^{p-1}), 
\]
the latter via the cup-product with the canonical generator of $H^3 (\tilde X_0;\Z/2)$. The Massey product can then be viewed as a multi-valued ternary operation 
\begin{equation}\label{E:RRR}
\mu:\, R \times R \times R \longrightarrow  R
\end{equation}
whose value $\mu(x,y,z) = \langle x, y, z \rangle$ on a triple $(x, y, z)$ is only well defined up to adding an arbitrary linear combination of $x$ and $z$. 

The Massey products \eqref{E:RRR} were calculated by Miller \cite[Theorem 3.33]{miller} for all $L(p,q)$ such that $0 < q < p/2$ using the intersection theory of \cite{SL}. The theorem below summarizes Miller's calculation. To state it, we need some definitions.

For any interval $(a,b) \subset \mathbb R$ denote by $(a,b)_{S^1}$ its projection to the circle $S^1$ 
viewed as the quotient of $\mathbb R$ by the subgroup $p\,\mathbb Z$. Given $j$ and $k$ in $\Z/p$, we say that $k$ is an interloper of $j$, denoted by $k \prec j$, if  
\smallskip
\[
\begin{cases}
\quad jq \in (0,q)_{S^1}\;\;\text{and}\;\; k\in[j,p],\;\;\text{or} \\
\; -jq \in (0,q)_{S^1}\;\;\text{and}\;\;k\in[0,j].
\end{cases}
\]

\smallskip\noindent
By definition, integers $j$ such that neither $jq$ nor $-jq$ belong to $(0,q)_{S^1}$ have no interlopers.

\smallskip

\begin{theorem}\label{T:miller}
The Massey product structure \eqref{E:RRR} is completely determined by the linearity and the Massey products $\langle t^k,t^{\ell},t^j\rangle$ on the monomials which, before factoring out the ideals, obey the following relations\,:
\medskip
\begin{itemize}
\item $\langle t^{k+n},t^{\ell+n},t^{j+n}\rangle = t^n\cdot \langle t^k,t^{\ell},t^j\rangle$
\medskip
\item $\langle t^{k},t^{\ell},t^{j}\rangle = \langle t^j,t^{\ell},t^k\rangle$
\medskip
\item Assuming $j \neq 0$,

\bigskip

$\langle 1,1,t^{j} \rangle\, =\,
\begin{cases} \; t\;\, + \ldots +\; t^j, &\; \text{if}\; -jq \in (0,q)_{S^1}\\
		      \; t^j + \ldots + t^{p-1},   &\; \text{if}\quad\; jq \in (0,q)_{S^1}\\
                      \; 0, &\; \text{otherwise}
\end{cases}$
\bigskip
\item Assuming $j \neq 0$ and $k \neq 0$,

\bigskip

$\langle t^k,1,t^{j}\rangle =
\begin{cases} \; t^k + t^j, &\; \text{if}\quad k \prec j\;\; \text{and}\;\; j \prec k\\
	              \quad t^k,  &\; \text{if}\quad k \prec j\;\; \text{and}\;\; j \nprec k \\
                       \quad t^j,      &\; \text{if}\quad j \prec k\;\; \text{and}\;\; k \nprec j \\
   		      \quad  0,      &\; \text{otherwise}
\end{cases}$
\end{itemize}
\end{theorem}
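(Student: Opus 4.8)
The plan is to reconstruct Miller's computation inside the intersection-theoretic model of Longoni and Salvatore \cite{SL}. In that model the generators $a_k\in H^2(\tilde X_0;\Z/2)$ (with $t^k$ corresponding to $a_{k+1}$ under $\phi$) are the Poincar\'e--Lefschetz duals of the codimension-two annuli $A_k$, the cup product of two such classes is realized by a transverse geometric intersection, and the triple product $\langle\,\cdot,\cdot,\cdot\,\rangle$ is realized by intersecting a bounding chain of one pairwise intersection against the third representative. Concretely, since $\tilde X_0$ is six-dimensional and each $A_k$ is four-dimensional, a pairwise intersection $A_k\cap A_\ell$ is a two-cycle; its class vanishes because $H^4(\tilde X_0;\Z/2)=0$ (cf.\ Lemma \ref{L:cup}), so it bounds a three-chain $B_{k\ell}$. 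Following the cochain definition $dZ=\bar x\cup\bar y$, $dX=\bar y\cup\bar z$, the product $\langle a_k,a_\ell,a_j\rangle$ is then represented by the one-cycle $B_{k\ell}\cap A_j+A_k\cap B_{\ell j}$; via Poincar\'e--Lefschetz duality this one-cycle determines the degree-five Massey class, which we must identify in $H^5(\tilde X_0;\Z/2)\cong R$.

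First I would record the two structural properties and use them to cut the problem down to two families. Equivariance, $\langle t^{k+n},t^{\ell+n},t^{j+n}\rangle=t^n\langle t^k,t^\ell,t^j\rangle$, follows from the naturality of Massey products under the deck transformation $\tau_{1,0}$, which by Lemma \ref{L:diag} sends $a_k$ to $a_{k+1}$ and hence acts as multiplication by $t$ on $R$. The symmetry $\langle t^k,t^\ell,t^j\rangle=\langle t^j,t^\ell,t^k\rangle$ is the general reflection identity for the triple product, which over $\Z/2$ carries no sign. Equivariance normalizes the middle index to zero, reducing every monomial product to $\langle t^a,1,t^b\rangle$; symmetry then swaps the outer entries, so it suffices to evaluate $\langle 1,1,t^j\rangle$ and $\langle t^k,1,t^j\rangle$ with $k,j\neq 0$. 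Finally, the additivity of Massey products in each variable modulo their indeterminacy ideal is exactly the assertion that the whole operation is determined from the monomial values by linearity.

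The heart of the argument is the explicit evaluation of the two families by intersection theory. Here I would work in the rotation parametrization $A_k=\{((z_1,z_2),(\zeta^s z_1,\zeta^{qs}z_2))\mid s\in[k-1,k]\}$, push off the relevant copies to make them transverse, and track the parameter intervals along which their images meet. Two such annuli intersect precisely when the corresponding rotation angles, after the $q$-fold speed-up in the second coordinate, overlap on the circle $\mathbb R/p\mathbb Z$; this is what the condition $\pm jq\in(0,q)_{S^1}$ detects, and it governs whether the first pairwise intersection is nonempty at all. The bounding chain $B_{k\ell}$ is then built from the region swept out between the overlapping angles, and its intersection with $A_j$ is counted by where the third rotation angle falls relative to that region. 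This is the geometric meaning of the interloper relation $k\prec j$: when the third annulus threads through the swept region, the resulting one-cycle is the consecutive sum $t+\cdots+t^j$ or $t^j+\cdots+t^{p-1}$, and otherwise it is a single monomial or zero, exactly as listed.

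The main obstacle, and the technical core, is this last step: producing the bounding chains $B_{k\ell}$ explicitly and transversally in the six-dimensional orbit configuration space and then honestly counting $B_{k\ell}\cap A_j+A_k\cap B_{\ell j}$ as an element of $R$. Getting the consecutive-sum ranges and the three-way case split for $\langle t^k,1,t^j\rangle$ right requires careful bookkeeping of the orientation of rotation (the $jq$ versus $-jq$ dichotomy) and of the cyclic order in which the three angles appear. This is precisely the combinatorial analysis that Longoni and Salvatore carried out for $q=1,2$ and that Miller \cite[Theorem 3.33]{miller} extended to arbitrary $q$; I would follow their bookkeeping to reach the stated formulas.
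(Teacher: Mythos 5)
The paper offers no proof of this theorem at all: it is stated explicitly as a summary of Miller's result \cite[Theorem 3.33]{miller}, which extends the intersection-theoretic computation of Longoni and Salvatore \cite{SL}, and your proposal reconstructs the outline of exactly that argument — the Poincar\'e--Lefschetz duality model with the annuli $A_k$, the equivariance reduction via Lemma \ref{L:diag}, the symmetry and linearity reductions to the two monomial families, and the interloper combinatorics. Since at the technical core (constructing the bounding chains $B_{k\ell}$ and counting the intersections) you defer to the same source the paper cites, your attempt is essentially the same approach as the paper's, and the structural steps you do carry out are sound.
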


\smallskip

\begin{example}
Let $L = L(5,2)$. The condition $\pm\,2 j \in (0,2)_{S^2}$ is only satisfied when $j = 2$ and $j = 3$. In the former case, $-2j = -4 = 1 \in (0,2)_{S^1}$, and in the latter $2j = 6 = 1 \in (0,2)_{S^1}$. The interlopers of $j = 2$ are $k = 0, 1, 2$, and those of $j = 3$ are $k = 3, 4, 0$. According to the above theorem, $\langle 1,1,t^2 \rangle = t + t^2$, $\langle 1,1,t^3 \rangle = t^3 + t^4$, $\langle t,1,t^2 \rangle = t$, $\langle t^4,1,t^3 \rangle = t^4$, up to the indeterminacy.
\end{example}


\subsection{Naturality}\label{S:natural}
Let $f: X_0' \to X_0$ be a homotopy equivalence between deleted squares of lens spaces as in Section \ref{S:heq}. Using identifications of $H^2 (\tilde X_0;\Z/2)$ and $H^2 (\tilde X'_0;\Z/2)$ with the cyclotomic ring $R$, we will view the corresponding Massey products as ternary operations $\mu: R \times R \times R \to R$ and $\mu': R \times R \times R \to R$. Since Massey products are natural with respect to homotopy equivalences, the following diagram 
\smallskip
\[
\begin{CD}
R \times R \times R @ > \mu >> R \\
@V \tilde f^* \times \tilde f^* \times \tilde f^* VV @VV \tilde f^* V \\
R \times R \times R @ > \mu' >> R \\
\end{CD}
\]

\medskip\noindent
must commute up to indeterminacy in the Massey products. Note that the map $\tilde f^*$ in this diagram is an isomorphism of abelian groups which, according to Corollary \ref{C:heq}, makes the following diagram commute 
\smallskip
\[
\begin{CD}
R @> \tilde f^* >> R \\
@V t VV @VV t^{\beta} V \\
R @> \tilde f^* >> R
\end{CD}
\]

\medskip\noindent
where $\alpha\cdot \beta = 1\mod p$. Taking all of the above into account, we conclude that the homomorphism $\tilde f^*$ is uniquely determined by the polynomial $\tilde f^*(1) \in R$ and, for any choice of $k, \ell \pmod p$, satisfies the relation
\smallskip
\begin{equation}\label{E:massey}
\begin{array}{lr}
\mu' \left(t^{\beta k} \tilde f^*(1),\, \tilde f^*(1),\, t^{\beta \ell} \tilde f^*(1) \right) = \tilde f^* \left(\mu\,(t^k, 1, t^{\ell})\right) \\ \hspace{1.2in} +\, (a\cdot t^{\beta k} + b\cdot t^{\beta \ell})\cdot \tilde f^* (1)\;\;\text{for some\; $a, b \in \Z/2$}.
\end{array}
\end{equation}

\smallskip\noindent
Writing $\tilde f^*(1) \in R$ as a polynomial with undetermined coefficients and using the Massey product formulas of Theorem \ref{T:miller}, one can attempt solving this system of equations using a computer. The {\tt Maple} worksheet we used can be obtained from either of the authors.

\begin{example}
As a warm up exercise, let $qq' = 1$ (mod $p$) and consider the homeomorphism $L(p,q') \to L(p,q)$ sending $(z_1,z_2)$ to $(z_2,z_1)$. It gives rise to a homeomorphism $f: X'_0 \to X_0$ of deleted squares and a homeomorphism $\tilde f$ of their universal covers. The map $\tilde f^*$ can be found using the commutative diagram
\[
\begin{CD}
H^2(\tilde X_0) @> \tilde f^* >> H^2(\tilde X'_0)\\
@V \PD VV @V \PD VV\\
H_4\left(S^3\times S^3,\;\bigsqcup\,\Delta_k\right)  @> \tilde h_* >> H_4\left(S^3\times S^3,\;\bigsqcup\,\Delta_k\right),
\end{CD}
\]

\medskip\noindent
where PD is the Poincar\'e duality isomorphism and $h$ is the inverse of $f$. Explicitly,
\medskip
\[
\begin{split}
\tilde h (A_k) & = \tilde h (\{((z_1,z_2),(\zeta^s z_1,\zeta^{qs} z_2))\in S^3\times S^3\; |\; s\in [k-1,k]\}) \\
 & =\;\; (\{((z_2, z_1),(\zeta^{qs} z_2,\zeta^{s} z_1))\in S^3\times S^3\; |\; s\in [k-1,k]\})\\
 &=\;\; (\{((z_2, z_1),(\zeta^t z_2,\zeta^{q't} z_1))\;\in\, S^3\times S^3\; |\; t \in [q(k-1),qk]\})\\
&=\;\; A_{q(k-1)+1}+...+A_{qk}.
\end{split}
\]

\smallskip\noindent
Using the identification of $a_{k+1}$ with $t^k$, the above formula becomes $\tilde f^*(t^k) = t^{qk} + ... + t^{q(k+1)-1}$ so in particular $\tilde f^*(1) = 1 + t + \ldots + t^{q-1}$. We used our computer program to double check this answer for several prime $p$ and several $q$ and $q'$ between $0 $ and $p/2$. In every example, the computer confirmed that $\tilde f^*(1) = 1 + t + \ldots + t^{q-1}$ is a solution of \eqref{E:massey}, and in fact a unique solution up to multiplication by a power of $t$.
\end{example}

\begin{example}
Lens spaces $L(11,2)$ and $L(11,3)$ provide the smallest example of non-homeomorphic lens spaces which are homotopy equivalent and whose deleted squares have universal covers with non-vanishing Massey products, see Theorem \ref{T:miller} and also Tables 3 and 4 in Miller \cite{miller} (there is a typo in Table 4: the $(4,0)$ entry should start at 4 and not 5). The computer found no non-zero solutions of \eqref{E:massey}, implying that the deleted squares of $L(11,2)$ and $L(11,3)$ are not homotopy equivalent. Paolo Salvatore has informed us that he obtained this result in 2010 using Miller's formulas.
 \end{example}
 
 \begin{example}
We have used our methods to check that multiple pairs of lens spaces which are homotopy equivalent but not homeomorphic have non-homotopy equivalent deleted squares. Among these pairs are $L(11,2)$ and $L(11,4)$, $L(13,2)$ and $L(13,5)$, $L(13,5)$ and $L(13,6)$, $L(17,3)$ and $L(17,5)$. All of these calculations weigh in for a positive answer to the question posed in the introduction, whether the homotopy type of deleted squares distinguishes lens spaces up to homeomorphism.
\end{example}


\smallskip


\begin{thebibliography}{10} 

\bibitem{auckly}
D.~Auckly, {\em Topological methods to compute Chern--Simons invariants}, Math. Proc. Cambridge Phil. Soc. \textbf{115} (1994), 229--251

\bibitem{APS:II}
M.~Atiyah, V.~Patodi, I.~Singer,
{\em Spectral asymmetry and Riemannian geometry. II}, Mat. Proc. Cambridge Phil. Soc. \textbf{78} (1975), 405--432

\bibitem{CS}
J.~Cheeger, J.~Simons, {\em Differential characters and geometric invariants}, Lecture Notes in Math. 
\textbf{1167}, Springer, 1985

\bibitem{gordon}
C.~McA.~Gordon {\em On the G-signature theorem in dimension four}. In:  A la Recherche de la Topologie Perdue, ed. Guillou and Marin, Birkh{\"a}user, 1986

\bibitem{hatcher}
A.~Hatcher, Algebraic Topology. Cambridge Univ. Press, 2002.

\bibitem{KK}
P.~Kirk, E.~Klassen, {\em Chen--Simons invariants of 3-manifolds and representation spaces of knot groups}, Math. Ann. \textbf{287} (1990), 343--367

\bibitem{KS}
S.~Kwasik, R. Schultz, {\em All $\Z_p$ lens spaces have diffeomorphic squares}, Topology \textbf{41} (2002), 321--340

\bibitem{SL}
R. Longoni, P. Salvatore, {\em Configuration spaces are not homotopy invariant}, Topology \textbf{44} (2005), 375-380

\bibitem{miller}
M. Miller, {\em Rational homotopy models for two-point configuration spaces	of lens spaces}, Homology, Homotopy and Applications \textbf{13} (2011), 43--62.

\bibitem{rudyak}
Y.~Rudyak, On Thom spectra, orientability, and cobordism. Springer Verlag, 1998

\end{thebibliography}
\end{document}